\definecolor{DarkGreen}{rgb}{0.2,0.6,0.2}
\def\green#1{\textcolor{DarkGreen}{#1}}
 \def\green#1{}
\def\om{\omega}
\numberwithin{equation}{section}
\def\Ind#1{{\mathbbmss 1}_{_{\scriptstyle #1}}}
\def\ua{\uparrow}
\def\da{\downarrow}
\def\wh{\widehat}
\def\ignore#1{}
\def\bR{{\mathbb R}}
\def\bZ{\mathbb Z}
\def\bN{\mathbb N}
\def\bT{\mathbb T}
\def\cX{{\mathscr X}}
\def\<{\langle}\def\>{\rangle}
\newtheorem{theorem}{Theorem}[section]
\newtheorem{proposition}[theorem]{Proposition}
\newtheorem{corollary}[theorem]{Corollary}
\newtheorem{lemma}[theorem]{Lemma}
\theoremstyle{definition}
\newtheorem{remark}[theorem]{Remark}
\title{\bf On a class of generalized Takagi functions\\ with linear pathwise quadratic variation}
\author{ \normalsize Alexander Schied\thanks{E-mail: {\tt schied@uni-mannheim.de}\hfill\break
The author gratefully acknowledges support by Deutsche Forschungsgemeinschaft through the Research Training Group RTG 1953}\\  \normalsize Department of Mathematics\\
 \normalsize University of Mannheim\\
 \normalsize 68131 Mannheim, Germany}
\date{\small First version: January 15, 2015\\
This version:  August 4, 2015\\
 }
\begin{document}

\maketitle
\vspace{-0.5cm}

\begin{abstract}We consider a class $\mathscr{X}$ of continuous functions on $[0,1]$ that is of interest from two different perspectives. First, it is closely related to sets of functions that have been studied as generalizations of the Takagi function. Second, each function in $\mathscr{X}$ admits  a linear pathwise quadratic variation and can thus serve as an integrator in F\"ollmer's pathwise It\=o calculus. We derive several uniform properties of the class $\mathscr{X}$. For instance, we compute the overall pointwise maximum, the uniform maximal oscillation, and the exact uniform modulus of continuity  for all functions in $\mathscr{X}$. 
Furthermore, we give an example of a pair $x,y\in\mathscr{X}$ for which the quadratic variation of the sum $x+y$ does  not exist. \end{abstract}

\noindent{\it  Mathematics Subject Classification 2010:} 26A30, 26A15, 60H05, 26A45

\bigskip

\noindent{\it Key words:} Generalized Takagi function, Takagi class, uniform modulus of continuity, pathwise quadratic variation, pathwise covariation, pathwise It\=o calculus, F\"ollmer integral

\section{Introduction}

In this note, we study a class $\cX$ of continuous functions on $[0,1]$ that is of interest from several different perspectives. On the one hand, just as typical Brownian sample paths, each function $x\in\cX$ admits the linear pathwise quadratic variation, $\<x\>_t=t$, in the sense of F\"ollmer~\cite{FoellmerIto} and therefore can serve as an integrator in F\"ollmer's pathwise It\=o calculus. On the other hand, $\cX$ is a subset, or  has a nonempty intersection, with classes of functions that have been studied as generalizations  of Takagi's celebrated example~\cite{Takagi} of a nowhere differentiable continuous function. We will now explain the connections of our results with these two separate strands of literature.

\subsection{Contributions to F\"ollmer's pathwise It\=o calculus}
In 1981,  F\"ollmer~\cite{FoellmerIto} proposed a pathwise 
version of It\=o's formula, which, as a consequence, yields a strictly pathwise definition of the It\=o integral as a limit of Riemann sums. 
Some recent developments have led to a renewed interest in this pathwise approach. Among these is the conception of  \emph{functional} pathwise It\=o calculus by Dupire~\cite{Dupire} and Cont and Fourni\'e~\cite{ContFournieJFA, ContFournieAOP}, which for instance is crucial in defining partial differential equations on   path space~\cite{EkrenKellerTouzi}. Another source for the renewed interest in pathwise It\=o calculus stems from the growing awareness of model ambiguity in mathematical finance and the resulting desire to reduce the reliance on probabilistic models; see, e.g.,~\cite{FoellmerSchiedBernoulli} for a recent survey and~\cite{Benderetal1,BickWillinger,DavisRavalObloij,FoellmerECM,SchiedCPPI,SchiedStadje} for case studies with successful applications of pathwise It\=o calculus to financial problems. A systematic introduction to pathwise It\=o calculus, including an English translation of~\cite{FoellmerIto}, is provided in~\cite{Sondermann}.

A function $x\in C[0,1]$ can serve as an integrator in F\"ollmer's pathwise It\=o calculus if it admits a continuous  pathwise quadratic variation $t\mapsto \<x\>_t$ along a given refining sequence  of partitions of $[0,1]$. This condition is satisfied whenever $x$ is a  sample path of a continuous semimartingale, such as Brownian motion,  and does not belong to a certain nullset. This nullset, however,   is generally not known explicitly, and so it is not possible to tell whether a specific realization $x$ of Brownian motion does indeed admit a continuous pathwise  quadratic variation. The first purpose of this note is to provide a  rich class $\cX$ of continuous functions  that can be constructed in a straightforward manner and that do admit the  nontrivial  pathwise quadratic variation $\<x\>_t=t$ for all $x\in\cX$. The functions in $\cX$ can  thus  be used as a class of test integrators in pathwise It\=o calculus. Our corresponding result, Proposition~\ref{Takagi quad var prop}, slightly extends a previous result by Gantert~\cite{GantertDiss,Gantert}, from which it follows that $\<x\>_1=1$ for all $x\in\cX$.

 Still within this context, a second purpose of this note is to investigate whether the existence of $\<x\>$ and $\<y\>$ implies the existence of $\<x+y\>$ (or, equivalently,  the existence of the pathwise quadratic covariation $\<x,y\>$).  For typical sample paths of a continuous semimartingale, this implication is always true, but the corresponding nullset will depend on both $x$ and $y$. In the literature on pathwise It\=o calculus, however, it has been taken for granted  that  the existence of $\<x+y\>$  cannot be deduced from the existence of $\<x\>$ and $\<y\>$. 
In Proposition~\ref{MountTakagi_var_prop} we will now give an example of two  functions $x,y\in\cX$ for which $\<x+y\>$  does indeed not exist.  To the knowledge of the author, such an example has so far been missing from the literature. 

\subsection{Contributions to the theory of generalized Takagi functions}

In 1903, Takagi~\cite{Takagi} proposed an example of a continuous function on $[0,1]$ that is nowhere differentiable. This function has since been rediscovered several times and its properties have been studied extensively; see the recent surveys by Allaart and Kawamura~\cite{AllaartKawamura} and Lagarias~\cite{Lagarias}. While the original Takagi function itself does not belong to our class $\cX$, there are at least two classes of functions whose study was motivated by the Takagi function and that are intimately connected with $\cX$. One  family of functions is the \lq\lq Takagi class"  introduced in 1984 by Hata and Yamaguti~\cite{HataYamaguti}. Similar but more restrictive function classes were introduced earlier by Faber~\cite{Faber} or Kahane~\cite{Kahane}. The Takagi class  has a nonempty intersection with $\cX$  but neither one is included in the other. More recently, Allaart~\cite{Allaartflexible} extended the Takagi class to a more flexible class of functions. This family now contains $\cX$. By extending arguments given by K\^ono~\cite{Kono} for the Takagi class, Allaart~\cite{Allaartflexible} studies in particular the moduli of continuity of certain functions in his class. 

In contrast to these previous studies, the focus of this paper is not so much on the individual features of functions $x\in\cX$ but rather on uniform properties of the entire class $\cX$. Here we compute the overall pointwise maximum, the uniform maximal oscillation, and the exact uniform modulus of continuity  for all functions in $\cX$. In these computations, we cannot use previous methods that were conceived for the analysis of the Takagi functions and its generalizations. For instance, neither  the result and arguments from K\^ono~\cite{Kono}  nor the ones from Allaart~\cite{Allaartflexible} apply to the modulus of continuity of functions in $\cX$, and a suitable extension of the previous approaches must be developed. This new extension exploits the self-similar structure of $\cX$ and its members.

A special role in our analysis will be played by  the function $\wh x$, defined in \eqref{Takagi-style fct} below. It has previously appeared in the work of  Ledrappier~\cite{Ledrappier}, who studied the Hausdorff dimension of its graph, and in Gantert~\cite{GantertDiss,Gantert}. Here we will determine its global maximum and its exact modulus of continuity. In particular the results on the global maximum of $\wh x$ will be needed in our analysis of the uniform properties of $\cX$, but these results are also interesting in their own right.

\bigskip

This paper is organized as follows. In the subsequent Section~\ref{ResultsSection} we first introduce our class $\cX$ and then discuss its uniform properties in Theorems~\ref{max osc thm} and~\ref{modulus continuity thm} and Corollary~\ref{compact cor}. We then recall F\"ollmer's~\cite{FoellmerIto} notions of pathwise quadratic variation and covariation and state our corresponding results.  All proofs are given in Section~\ref{ProofsSection}. 

\section{Statement of results}\label{ResultsSection}

Recall that the \emph{Faber--Schauder functions}  are defined as
$$e_\emptyset(t):=t,\qquad e_{0,0}(t):=(\min\{t,1-t\})^+,\qquad e_{m,k}(t):=2^{-m/2}e_{0,0}(2^m t-k)
$$
for $t\in\bR$, $m=1,2,\dots$, and $k\in\bZ$. 
The graph of $e_{m,k}$  looks like a wedge with height $2^{-\frac {m+2}2}$,  width $2^{-m}$, and center at $t=(k+\frac12)2^{-m}$. In particular,  the functions $e_{m,k}$ have disjoint support   for  distinct $k$ and fixed $m$. Now let coefficients $\theta_{m,k}\in\{-1,+1\}$  be given and define for $n\in\bN$ the continuous functions
\begin{equation}\label{Mount Takagi approx xN}
x^n(t):=\sum_{m=0}^{n-1}\sum_{k=0}^{2^m-1}\theta_{m,k}e_{m,k}(t),\qquad 0\le t\le 1.
\end{equation}
It is well known (see, e.g.,~\cite{Allaartflexible}) and easy to see that, due the uniform boundedness of the coefficients $\theta_{m,k}$,  the functions $x^n(t)$ converge uniformly in $t$ to a continuous function $x(t)$ as $n\ua\infty$. 
Let us denote by 
$$\cX:=\Big\{x\in C[0,1]\,\Big|\,x= \sum_{m=0}^{\infty}\sum_{k=0}^{2^m-1}\theta_{m,k}e_{m,k}\text{ for  coefficients }\theta_{m,k}\in\{-1,+1\}\Big\}$$
the class of limiting functions arising in this way. A function $x\in \cX$ belongs to the \lq\lq Takagi class" introduced by Hata and Yamaguti~\cite{HataYamaguti} if and only if the coefficients $\theta_{m,k}$ in \eqref{Mount Takagi approx xN}
 are independent of $k$. Moreover, $\cX$ is a subset of the more flexible class of generalized Takagi functions studied by Allaart~\cite{Allaartflexible}. The original Takagi function, however, is obtained by taking $\theta_{m,k}=2^{-m/2}$ and therefore does not belong to $\cX$. 
 
\begin{remark}[\bfseries On similarities with Brownian sample paths]\label{Brownian remark} The functions in $\cX$ can exhibit   interesting fractal structures; see  Figure~\ref{Mount_TakagiFig}. Figure~\ref{Mount_TakagirandFig}, on the other hand, displays some similarities with the sample paths of a Brownian bridge. This similarity is not surprising since the well-known  L\'evy--Ciesielski construction of the Brownian bridge consists in replacing the coefficients $\theta_{m,k}\in\{-1,+1\}$ with independent standard normal random variables (see, e.g.,~\cite{KaratzasShreve}). As a matter of fact, using arguments of de Rham~\cite{deRham} and Billingsley~\cite{Billingsley1982}, it was shown in~\cite[Theorem 3.1 (iii)]{Allaartflexible} that functions in $\cX$ share with Brownian sample paths the property of being   nowhere differentiable. Moreover, Ledrappier~\cite{Ledrappier} showed that the Hausdorff dimension of the graph of the function  
 \begin{align}\label{Takagi-style fct}
 \wh x:=\sum_{m=0}^{\infty}\sum_{k=0}^{2^m-1}e_{m,k}
\end{align}
is the same as that of the graphs of typical Brownian trajectories, namely $3/2$. In Proposition~\ref{Takagi quad var prop} we will see, moreover, that the functions in $\cX$ have the same pathwise quadratic variation as Brownian sample paths. 
\end{remark} 

Our first result is concerned with (uniform) maxima and oscillations of the functions in $\cX$. It will also be concerned with the function $\wh x$ defined in \eqref{Takagi-style fct}, a  function that  will play a special role throughout our analysis. The maximum of the original Takagi function was computed by Kahane~\cite{Kahane}, but his method does not apply in our case, and more complex arguments are needed here.

 \begin{figure}[h]
\centering
\begin{minipage}[b]{8cm}
\begin{overpic}[width=8cm]{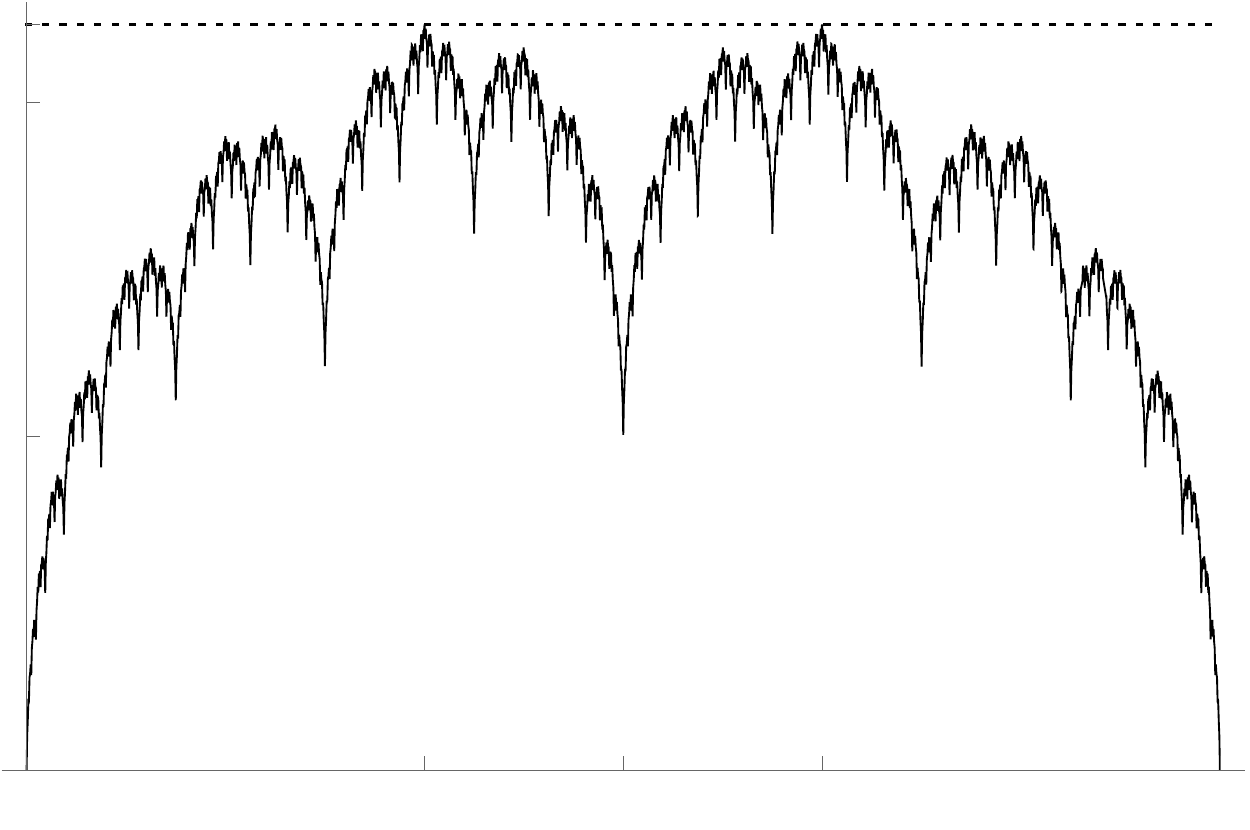}
\put(31,0){\scriptsize{1/3}}
\put(47,0){\scriptsize{1/2}}
\put(63,0){\scriptsize{2/3}}
\put(96,0){\scriptsize{1}}
\put(-5,31){\scriptsize{1/2}}
\put(-1,57){\scriptsize{1}}
\put(-16,64){\scriptsize{$\frac13(2+\sqrt2)$}}
\end{overpic}\\
\end{minipage}\qquad
\begin{minipage}[b]{8cm}
\includegraphics[width=8cm]{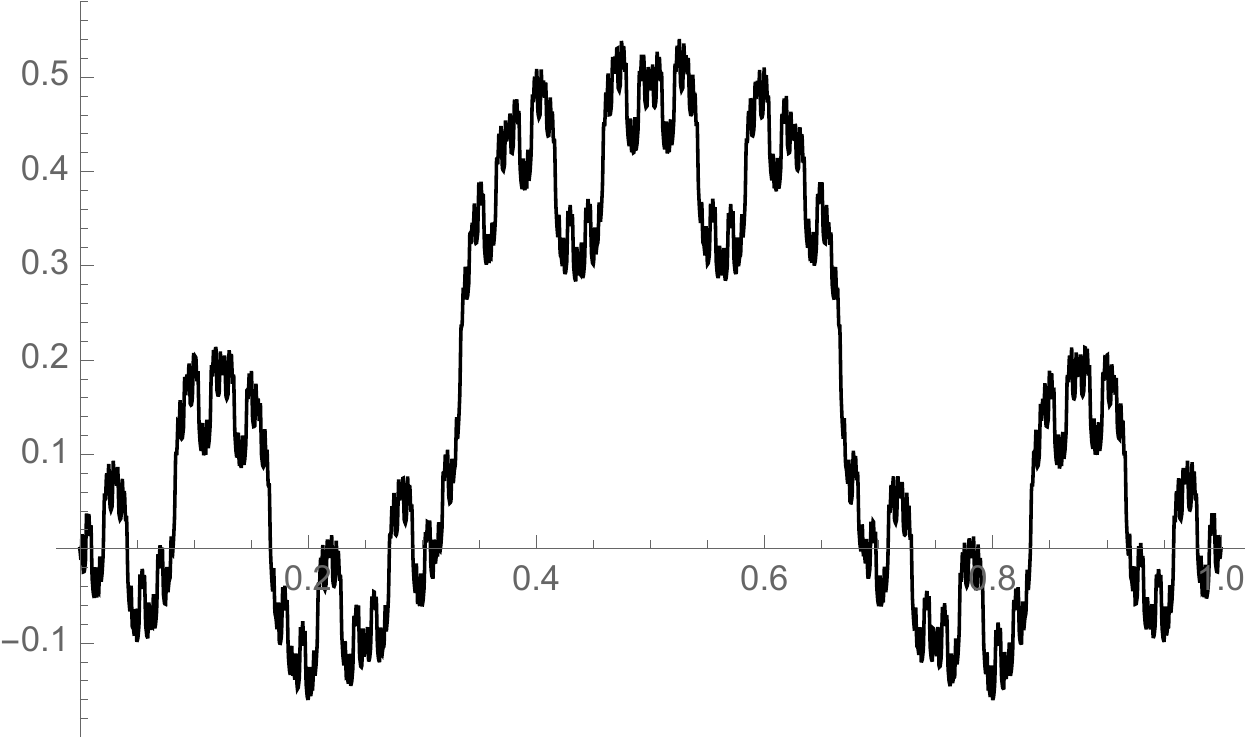}\\
\end{minipage}
\begin{minipage}[b]{8cm}
\includegraphics[width=8cm]{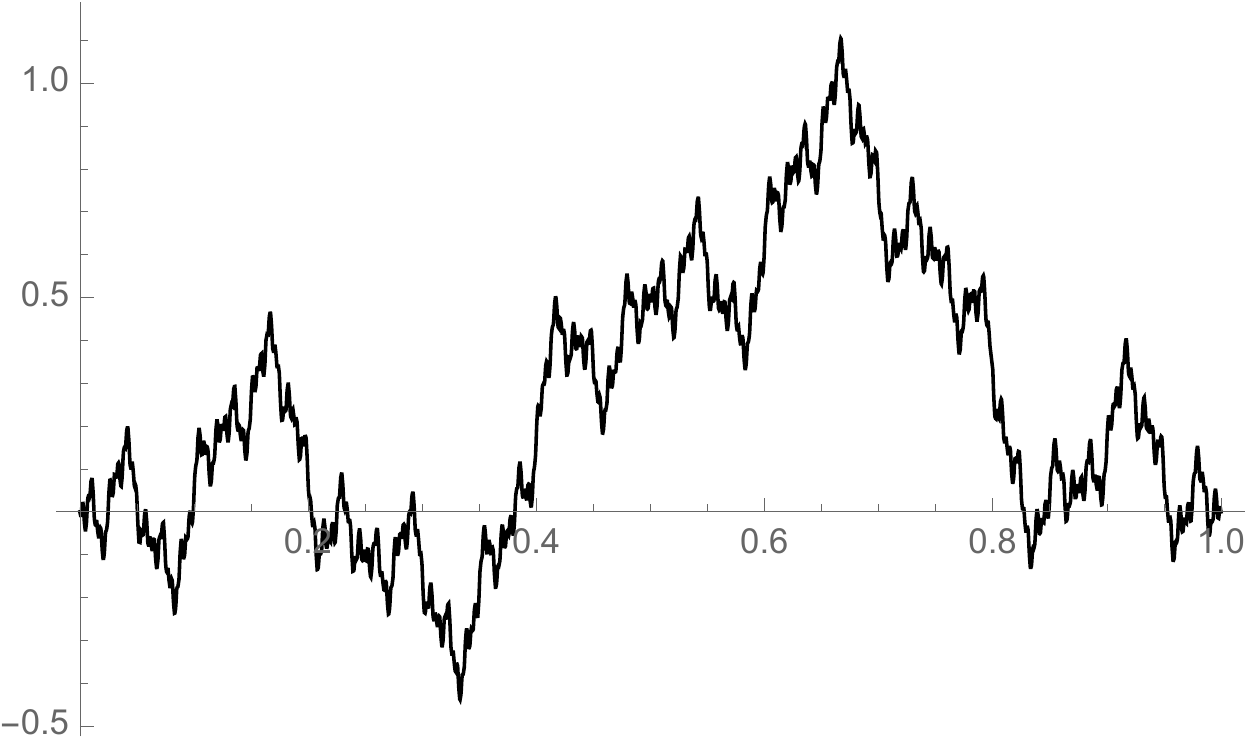}\\
\end{minipage}\hskip1.0cm
\begin{minipage}[b]{8cm}
\includegraphics[width=8
cm]{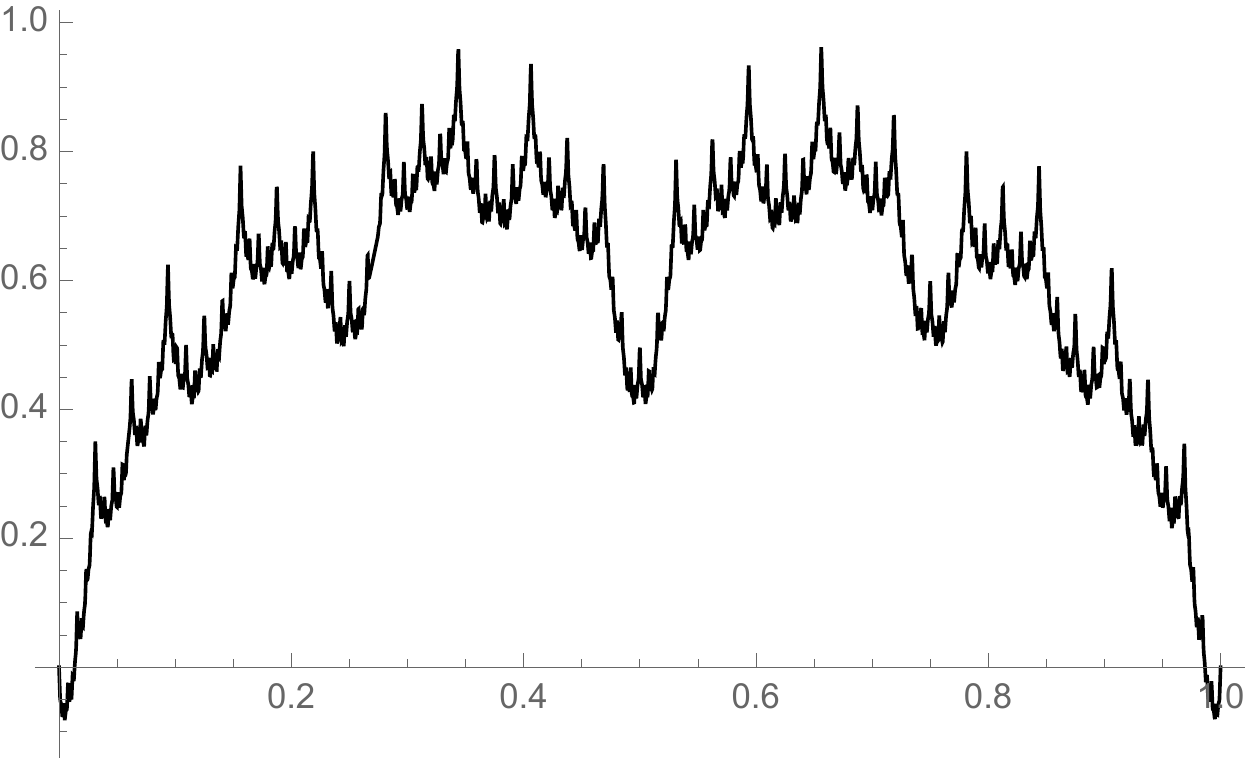}\\
\end{minipage}
 \caption{Plots of functions in $\cX$  for various choices of $\theta_{m,k}$.  The upper left-hand panel shows the function $\wh x$, which is defined through $\theta_{m,k}=1$, together with its global maximum. The upper right-hand panel corresponds to $\theta_{m,k}=(-1)^m$, the lower left-hand panel to $\theta_{m,k}=(-1)^{m+k}$, and the lower right-hand panel to $\theta_{m,k}=(-1)^{\lfloor m/5\rfloor}$. }
\label{Mount_TakagiFig}
\end{figure}
\begin{figure}[htbp]
\centering
\begin{minipage}[b]{8cm}
\includegraphics[width=8cm]{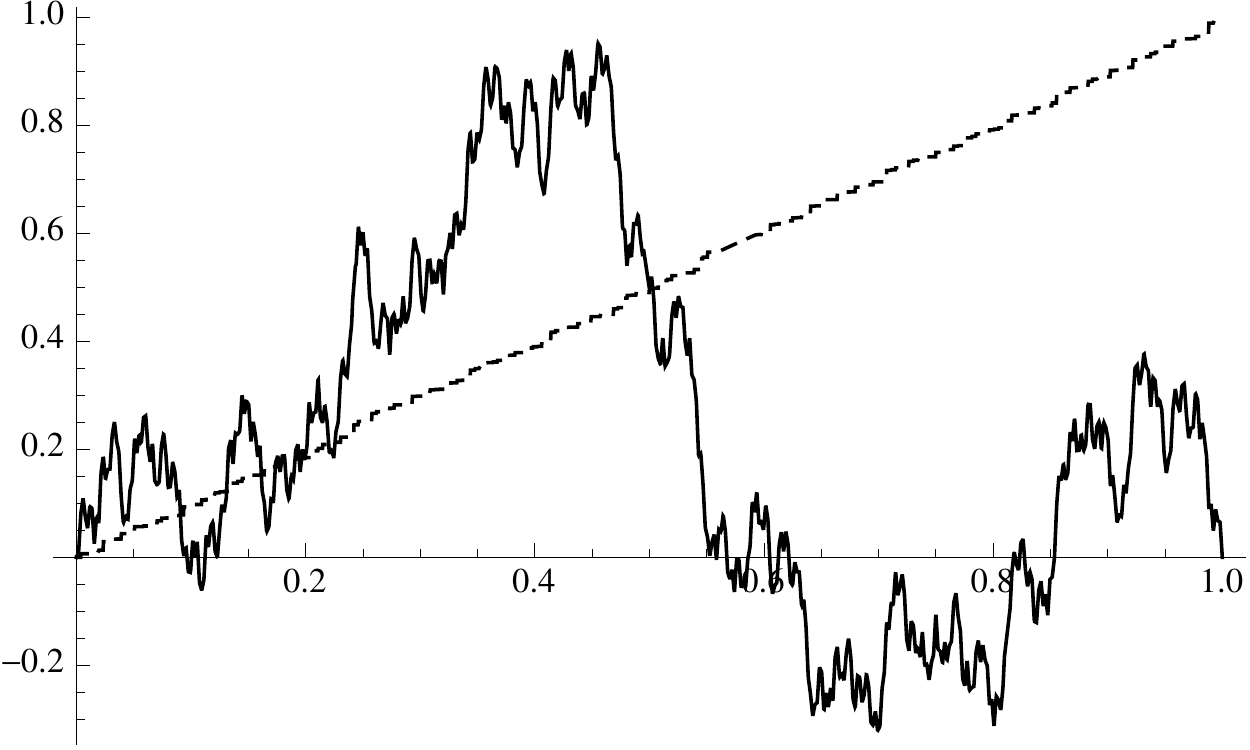}\\
\end{minipage}\qquad
\begin{minipage}[b]{8cm}
\includegraphics[width=8cm]{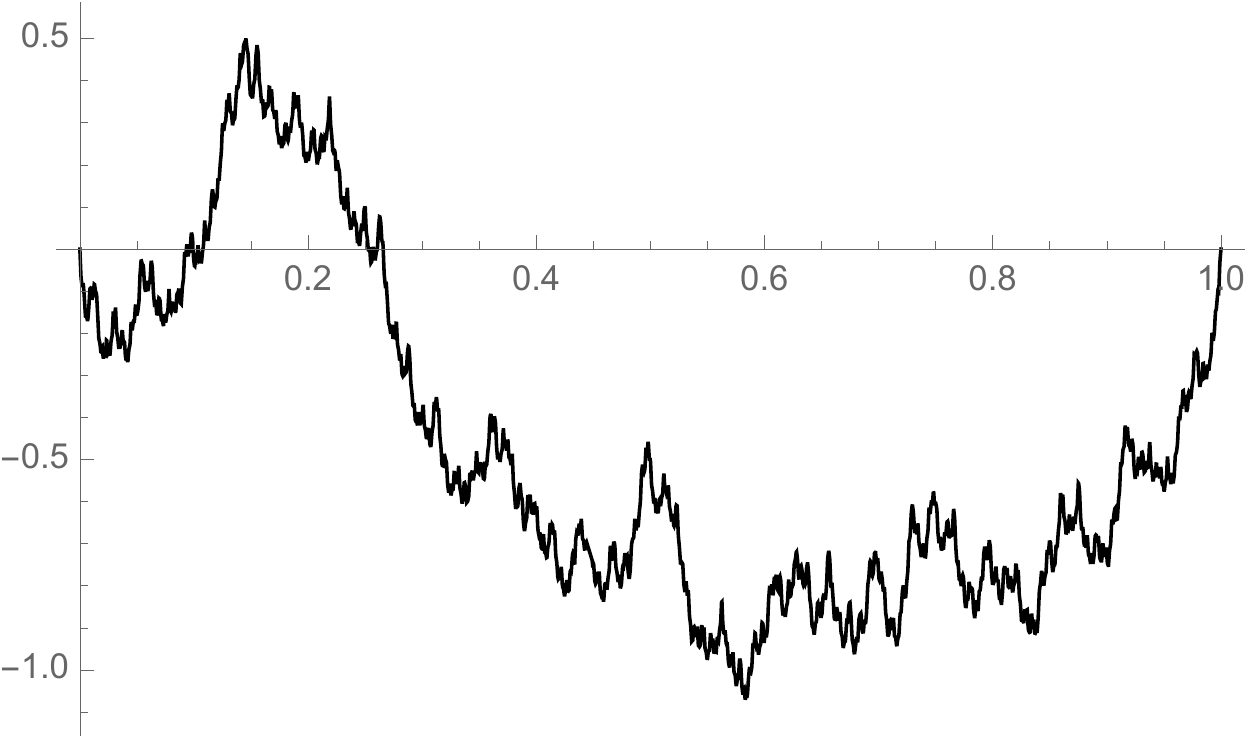}\\
\end{minipage}
\caption{Plots of  $x\in\cX$  when the coefficients $\theta_{m,k}$ form an independent and identically distributed $\{-1,+1\}$-valued random sequence such that  $\theta_{m,k}=+1$ with  probability $\frac12$ (left) and $\frac14$ (right). The dashed line corresponds to  the approximation $\<x\>^8$ of the quadratic variation along the $8^{\text{th}}$ dyadic partition $\bT_8$. }\label{Mount_TakagirandFig}
\end{figure}

\begin{theorem}[\bfseries Uniform maximum and oscillations]\label{max osc thm} The class $\cX$ has the following uniform properties.
\begin{enumerate}
\item  The uniform maximum of functions in $\cX$ is attained by $\wh x$ and  given by
$$\max_{x\in\cX}\max_{t\in[0,1]}|x(t)|=\max_{t\in[0,1]}\wh x(t)=\frac13(2+\sqrt2).$$
Moreover, the maximum of $\wh x(t)$ is attained at  $t=\frac13$ and $t=\frac23$.

\item The maximal  uniform  oscillation of functions in $\cX$ is 
$$\max_{x\in\cX}\max_{s,t\in[0,1]}|x(t)-x(s)|=\frac16(5+4{\sqrt2}),$$
where the respective maxima are attained at $s=1/3$,  $t=5/6$, and \begin{align}\label{xstar def eq}
x^*:=e_{0,0}+\sum_{m=1}^\infty\bigg(\sum_{k=0}^{2^{m-1}-1}e_{m,k}-\sum_{\ell=2^{m-1}}^{2^m-1}e_{m,\ell}\bigg);
\end{align}
\end{enumerate}
\end{theorem}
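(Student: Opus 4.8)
The plan is to reduce each part to a sharp statement about the single function $\wh x$ of \eqref{Takagi-style fct} and then to exploit its self-similarity. For part (a), the first step is a reduction. Since $e_{m,k}\ge0$ and $|\theta_{m,k}|=1$, for every $x\in\cX$ and every $t$ we have $|x(t)|=\big|\sum_{m,k}\theta_{m,k}e_{m,k}(t)\big|\le\sum_{m,k}e_{m,k}(t)=\wh x(t)$; as $\wh x\in\cX$ and $\wh x\ge0$, this gives $\max_{x\in\cX}\max_t|x(t)|=\max_t\wh x(t)$ at once, and the problem collapses to computing $\max\wh x$.

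Peeling off the generation $m=0$ and rescaling the rest yields the scaling identities
\begin{equation*}
\wh x(s)=s+\tfrac{1}{\sqrt2}\wh x(2s)\ \ (s\in[0,\tfrac12]),\qquad \wh x(s)=(1-s)+\tfrac{1}{\sqrt2}\wh x(2s-1)\ \ (s\in[\tfrac12,1]),
\end{equation*}
together with the reflection symmetry $\wh x(1-s)=\wh x(s)$. Iterating the first identity at $s=\tfrac13$ (where doubling keeps the orbit in $\{\tfrac13,\tfrac23\}$) gives the lower bound $\wh x(\tfrac13)=\tfrac13\sum_{m\ge0}2^{-m/2}=\tfrac13(2+\sqrt2)$. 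Writing $M:=\max\wh x$, attained at some $t^*$ by continuity and compactness, the first identity together with $\wh x\le M$ forces $\min\{t^*,1-t^*\}\ge M(1-\tfrac1{\sqrt2})\ge\tfrac13$, so every maximizer lies in $[\tfrac13,\tfrac23]$.

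The crux of (a) is the matching upper bound, and here I would iterate the identities once more. Substituting the $[\tfrac12,1]$-identity into the $[0,\tfrac12]$-identity gives, for $t\in[\tfrac13,\tfrac12]$, the two-step relation $\wh x(t)=(1-\sqrt2)t+\tfrac1{\sqrt2}+\tfrac12\wh x(4t-1)$. The affine part is decreasing and takes the value $M/2$ exactly at $t=\tfrac13$, while $\wh x(4t-1)\le M$; hence $\wh x(t)<\tfrac12M+\tfrac12M=M$ for all $t\in(\tfrac13,\tfrac12]$, and by symmetry for $t\in[\tfrac12,\tfrac23)$. Combined with the localization, the maximum is then attained only at $\tfrac13$ and $\tfrac23$, so $M=\tfrac13(2+\sqrt2)$. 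I expect the exact cancellation $(1-\sqrt2)\tfrac13+\tfrac1{\sqrt2}=M/2$ to be the decisive point; everything else is bookkeeping.

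For part (b), I would first swap the two maxima to obtain $\max_{x\in\cX}\max_{s,t}|x(t)-x(s)|=\max_{s,t}D(s,t)$ with $D(s,t):=\sum_{m,k}|e_{m,k}(t)-e_{m,k}(s)|$, the optimal coefficients being $\theta_{m,k}=\mathrm{sign}(e_{m,k}(t)-e_{m,k}(s))$; evaluated at the optimal pair this sign pattern is exactly the function $x^*$ of \eqref{xstar def eq}. I would then split according to which halves contain $s$ and $t$. If $s,t$ lie in the same half, then $D(s,t)=|t-s|+\tfrac1{\sqrt2}D(2s,2t)$ (and the analogue with $2s-1,2t-1$), so any same-half value is at most $\tfrac12+\tfrac1{\sqrt2}\max D$, which already forces $\max D\le\tfrac12(2+\sqrt2)$ in that case. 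If instead $s\in[0,\tfrac12]$ and $t\in[\tfrac12,1]$, then $D(s,t)=|1-s-t|+\tfrac1{\sqrt2}\big(\wh x(2s)+\wh x(2t-1)\big)$, and with $a=2s$, $b=2t-1$ the scaling identity $\tfrac a2+\tfrac1{\sqrt2}\wh x(a)=\wh x(a/2)$ collapses this, on $\{a+b\ge1\}$, to $D=\wh x(a/2)+\wh x(b/2)-\tfrac12$ (the region $\{a+b\le1\}$ reducing to it via $\wh x(a)=\wh x(1-a)$). By part (a) this is $\le 2M-\tfrac12=\tfrac16(5+4\sqrt2)$, with equality precisely at $a=b=\tfrac23$, i.e. $s=\tfrac13$, $t=\tfrac56$. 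Since $\tfrac16(5+4\sqrt2)>\tfrac12(2+\sqrt2)$, the same-half cases cannot attain the maximum, so the maximal oscillation equals $\tfrac16(5+4\sqrt2)$. As in (a), the only genuine difficulty is pinning down the sharp constant, which part (a) delivers through the identity turning the cross term into a sum of two copies of $\wh x$.
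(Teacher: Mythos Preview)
Your argument is correct and takes a genuinely different route from the paper in both parts.

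\medskip

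\textbf{Part (a).} The paper proves a preparatory lemma that computes, by induction on $n$, the maxima $M_n$ and unique maximizers $t_n^\pm$ of the truncations $\wh x^n$ in terms of the Jacobsthal numbers $J_n=\tfrac13(2^n-(-1)^n)$; letting $n\uparrow\infty$ then yields $M=\tfrac13(2+\sqrt2)$ and $t^\pm=\tfrac13,\tfrac23$. You bypass this entirely by working directly with the functional equation $\wh x(s)=s+\tfrac1{\sqrt2}\wh x(2s)$ on $[0,\tfrac12]$. Your two-step iteration on $[\tfrac13,\tfrac12]$ is clean; the only phrasing to tighten is that the affine part at $t=\tfrac13$ equals $c/2$ with $c:=\wh x(\tfrac13)=\tfrac13(2+\sqrt2)$, not yet $M/2$, so the conclusion reads $\wh x(t)<\tfrac{c}{2}+\tfrac{M}{2}\le M$ for $t\in(\tfrac13,\tfrac12]$, forcing any maximizer to lie in $\{\tfrac13,\tfrac23\}$ and hence $M=c$. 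The paper's route, while longer, buys extra information: it pins down the exact maxima and maximizers of every truncation $\wh x^n$, which your fixed-point argument does not provide.

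\medskip

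\textbf{Part (b).} The paper fixes $\theta_{0,0}=+1$, sandwiches an arbitrary $x\in\cX$ between $x^*$ and its reflections on each half of $[0,1]$, and then computes $\max x^*-\min x^*$ explicitly from part (a). You instead interchange the maxima to reduce to $D(s,t)=\sum_{m,k}|e_{m,k}(t)-e_{m,k}(s)|$, derive the same-half contraction $D(s,t)=|t-s|+\tfrac1{\sqrt2}D(2s,2t)$, and for the cross-half case obtain the elegant identity $D=\wh x(a/2)+\wh x(b/2)-\tfrac12$ on $\{a+b\ge1\}$ (with the complementary region handled by the symmetry $\wh x(a)=\wh x(1-a)$), which reduces the sharp constant directly to part~(a). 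One small correction: at the optimal pair $(s,t)=(\tfrac13,\tfrac56)$ your sign pattern $\theta_{m,k}=\mathrm{sign}(e_{m,k}(t)-e_{m,k}(s))$ actually produces $-x^*$, not $x^*$; since $|\!-\!x^*(t)+x^*(s)|=|x^*(t)-x^*(s)|$ this is harmless. Your approach is arguably more direct; the paper's sandwich argument is more geometric and makes the role of $x^*$ as the extremal function visible from the outset.
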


We refer to  Figure \ref{xstar fig} for a plot of the function $x^*$.

In our next result, we will investigate the modulus of continuity of $\wh x$ and the uniform modulus of continuity of  the class $\cX$. K\^ono~\cite{Kono}  analyzed the moduli of continuity for some functions in the Takagi class of Hata and Yamaguti~\cite{HataYamaguti}, and Allaart~\cite{Allaartflexible} later extended this result. However, neither the result and arguments from~\cite{Kono}  nor the ones from~\cite{Allaartflexible} apply to the functions in $\cX$,  because the sequence $a_m:=2^{m/2}$ is not bounded.  To state our results,  let us denote by 
\begin{align}\label{mu eq}
\nu(h):=\lfloor-\log_2h\rfloor,\qquad h>0,
\end{align}
 the integer part of $\log_2\frac1h$, and define
 \begin{align*}
 \om (h)&:=\Big(1+\frac1{\sqrt2}\Big)h2^{\nu(h)/2}+\frac13(\sqrt8+2)2^{-\nu(h)/2}. \end{align*}
Note that $\om (h)$ is of the order  $O(\sqrt h)$ as $h\da0$. More precisely, 
\begin{align*}
\liminf_{h\da0}\frac{\om (h)}{\sqrt h}=2\sqrt{\frac43+\sqrt2},\qquad\qquad&\limsup_{h\da0}\frac{\om (h)}{\sqrt h}=\frac16(11+7\sqrt2).\end{align*}
These exact limits will however not be needed in the remainder of the paper.

\medskip

\begin{theorem}[\bfseries Moduli of continuity]\label{modulus continuity thm}$ $
\begin{enumerate}
\item The function $\wh x$ has $\om $ as its modulus of continuity. More precisely,
$$\limsup_{h\da0}\max_{0\le t\le 1-h}\frac{|\wh x(t+h)-\wh x(t)|}{\om (h)}=1.
$$
\item An exact uniform modulus of continuity for functions in $\cX$ is given by $\sqrt2\om$. That is, 
 $$\limsup_{h\da0}\sup_{x\in\cX}\max_{0\le t\le 1-h}\frac{|  x(t+h)-  x(t)|}{ \om (h)}=\sqrt2.$$
 Moreover, the above supremum over functions $x\in\cX$ is attained by the function $x^*$ defined in \eqref{xstar def eq} in the sense that 
   $$\limsup_{h\da0}\max_{0\le t\le 1-h}\frac{|  x^*(t+h)-  x^*(t)|} {\om (h)}=\sqrt2.$$
\end{enumerate}
\end{theorem}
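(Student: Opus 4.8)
The plan is to prove each asserted $\limsup$ by establishing a matching upper and lower bound, and the common engine is the \emph{self-similarity} of the construction. From the two-scale identity $e_{m,k}(t/2)=2^{-1/2}e_{m-1,k}(t)$ for the Faber--Schauder functions one obtains the functional equation $\wh x(s)=s+\tfrac1{\sqrt2}\wh x(2s)$ on $[0,\tfrac12]$, and, iterating it $n$ times, the statement that on every dyadic interval $[k2^{-n},(k+1)2^{-n}]$ the function $\wh x$ equals an affine function (whose slope is determined by the binary digits of $k$) plus $2^{-n/2}$ times a rescaled copy of $\wh x$. Throughout I fix $h>0$, set $n:=\nu(h)$ so that $2^{-(n+1)}<h\le2^{-n}$, and write $\eta:=2^nh\in(\tfrac12,1]$; the whole analysis then lives at a single dyadic scale and reduces to a finite optimisation over $\eta$ and the position of the window relative to the grid.

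For the upper bound in part~(a), I would split the increment as $\wh x(t+h)-\wh x(t)=A_n+B_n$, where $A_n$ collects the levels $m<n$ and $B_n$ the tail $m\ge n$. On each level $m<n$ the partial sum $\sum_k e_{m,k}$ is piecewise linear with slopes $\pm2^{m/2}$, so $|A_n|$ is controlled by the cumulative slope $\sum_{m<n}2^{m/2}$ times $h$; this is the origin of the term $(1+\tfrac1{\sqrt2})h2^{\nu(h)/2}$. The tail $B_n$ is, by the self-similar reduction above, $2^{-n/2}$ times the increment of a rescaled copy of $\wh x$ over a window of macroscopic length $\eta$, and is therefore bounded through the global maximum $\max\wh x=\tfrac13(2+\sqrt2)$ furnished by Theorem~\ref{max osc thm}(a); this produces the term $\tfrac13(\sqrt8+2)2^{-\nu(h)/2}$. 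The nontrivial point is to combine the two contributions \emph{sharply}: the crude geometric-series bounds overestimate, and one has to keep track of how the cumulative slope and the tail oscillation are correlated through the binary digits of $t$, and to treat separately the windows that straddle a dyadic grid point.

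For the lower bound in part~(a), I would exhibit, along a sequence $h\da0$ with $\eta=2^{\nu(h)}h\to\tfrac12^+$, explicit windows that asymptotically realise the bound; the $\limsup_{h\da0}\om(h)/\sqrt h$ computation already recorded in the text shows the extremal regime is $\eta\to\tfrac12^+$, i.e.\ windows that straddle the midpoint of a level-$n$ cell, and self-similarity propagates a single such near-extremal configuration to all scales. For part~(b) I would run the same decomposition but now with the signs $\theta_{m,k}$ free: the low-frequency slopes can all be aligned and, simultaneously, the tail can be chosen to be a rescaled copy of the function in $\cX$ of maximal oscillation, so that the two contributions add \emph{in phase}. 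This is exactly what $\wh x$ cannot do with its fixed $+1$ coefficients, and the resulting gain is the factor $\sqrt2$. The function $x^*$ of \eqref{xstar def eq}, whose signs are $+1$ on the left half of $[0,1]$ and flipped on the right half, is designed to realise this simultaneous alignment: a window straddling the midpoint sees the drift of the $(+)$-phase left half and the oscillation of the flipped right half add constructively, which is the same mechanism behind the maximal oscillation $\tfrac16(5+4\sqrt2)$ on $[\tfrac13,\tfrac56]$ in Theorem~\ref{max osc thm}(b); self-similarity then spreads this mechanism across all small scales to give $\limsup=\sqrt2$.

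I expect the main obstacle to be the sharp upper bound, and within it the windows that straddle a dyadic boundary. For such windows neither the global Lipschitz estimate on the low-frequency part nor the self-similar oscillation estimate on the tail is individually tight, yet the extremum of the combined expression is attained precisely there (at $\eta\to\tfrac12^+$); controlling it forces a careful case distinction according to the position of $[t,t+h]$ relative to the level-$n$ and level-$(n+1)$ grids, together with the exact self-similar identity rather than its geometric-series surrogates. Once this sharp one-scale optimisation is in place, both the $\limsup\le$ direction and, by exhibiting the maximising windows, the $\limsup\ge$ direction follow, and part~(b) is obtained by the same one-scale analysis carried out with optimised signs.
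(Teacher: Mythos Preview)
Your strategy is the paper's strategy: split by frequency, control the low-frequency piece by slopes, and reduce the tail by self-similarity to an oscillation bounded via Theorem~\ref{max osc thm}. Two calibration points will spare you the ``sharp combination'' problem you flag as the main obstacle.

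First, the cutoff should be at level $n-1$, not $n$. With $A_n$ collecting $m\le n-2$ and $B_n$ the tail $m\ge n-1$, the slope bound gives $|A_n|\le h(1+\sqrt2)(2^{(n-1)/2}-1)$, and the rescaling \eqref{Schauder self-similarity} turns $B_n$ into $2^{(1-n)/2}$ times an increment of some $y\in\cX$ on $[0,1]$. For $\wh x$ one gets $y=\wh x$ (or a trivially shifted copy), so $|B_n|\le 2^{(1-n)/2}\cdot\tfrac13(2+\sqrt2)$ by Theorem~\ref{max osc thm}(a), and the two pieces sum \emph{exactly} to $\om(h)$ (after dropping a harmless $-h(1+\sqrt2)$). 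There is no overestimate to repair. If you cut at $n$ instead, the level-$(n-1)$ tent may have its peak inside $[t,t+h]$, the Lipschitz bound on $A_n$ is off by a factor $\sqrt2$, and you are forced into the delicate correlation analysis you anticipate.

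Second, the case distinction is not about the $\eta\to\tfrac12^+$ regime but about where $[t,t+h]$ sits in the level-$(n-1)$ grid: if $t$ lies in the left half of a level-$(n-1)$ support (Case~1) the window stays inside and \eqref{Schauder sum rescaling eq} applies directly; if in the right half (Case~2) the window may straddle two supports, and one must further split on whether the two level-$(n-1)$ signs agree (Case~2a, reducible to Case~1) or differ (Case~2b). For $\wh x$ Case~2b never occurs, which is why its modulus is only $\om$; for general $x\in\cX$ Case~2b is exactly where the extra factor $\sqrt2$ enters, and the lower bound for $x^*$ is realised there by the explicit window $t_n=\tfrac12-\tfrac13 2^{-n}$, $h_n=\tfrac23 2^{-n}$ straddling $t=\tfrac12$. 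The lower bound for $\wh x$ is simpler than you suggest: $t=0$, $h_n=\tfrac23 2^{-n}$ already gives $|\wh x(h_n)-\wh x(0)|=\om(h_n)-O(h_n)$ by Theorem~\ref{max osc thm}(a), with no straddling needed.
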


\medskip

\begin{remark}In the proof of Theorem \ref{modulus continuity thm}, we will actually show the following upper bounds that are stronger than the corresponding statements in the theorem:
$$|\wh x(t+h)-\wh x(t)|\le \om (h)\qquad\text{and}\qquad \sup_{x\in\cX}| x(t+h)-x(t)|\le\sqrt2 \om (h)
$$
for all $h\in[0,1)$ and $t\in [0,1-h]$.
\end{remark}

\medskip

\begin{corollary}\label{compact cor}$\cX$ is  a compact subset of $C[0,1]$ with respect to the topology of uniform convergence.
\end{corollary}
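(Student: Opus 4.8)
The plan is to exhibit $\cX$ as the continuous image of a compact space, so that compactness follows immediately. Let $I:=\{(m,k): m=0,1,2,\dots\text{ and }0\le k\le 2^m-1\}$ be the index set of the Faber--Schauder coefficients and equip the product $\Theta:=\{-1,+1\}^I$ with the product topology. Being a countable product of finite discrete spaces, $\Theta$ is compact by Tychonoff's theorem (and metrizable). Define the coefficient-to-function map
$$\Phi:\Theta\lra C[0,1],\qquad \Phi(\theta):=\sum_{m=0}^\infty\sum_{k=0}^{2^m-1}\theta_{m,k}e_{m,k}.$$
By the very definition of $\cX$ we have $\Phi(\Theta)=\cX$, so it suffices to show that $\Phi$ is continuous: the continuous image of a compact set is compact.

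The key step is a uniform estimate for $\Phi$ that exploits the disjoint-support structure of the Faber--Schauder system. Suppose $\theta,\theta'\in\Theta$ agree on all indices $(m,k)$ with $m<N$. For each fixed $m$ the wedges $e_{m,k}$, $0\le k\le 2^m-1$, have pairwise disjoint supports and common height $\|e_{m,k}\|_\infty=2^{-(m+2)/2}$, so the level-$m$ contribution to $\Phi(\theta)-\Phi(\theta')$ satisfies
$$\Big\|\sum_{k=0}^{2^m-1}(\theta_{m,k}-\theta'_{m,k})e_{m,k}\Big\|_\infty=\max_{0\le k\le 2^m-1}|\theta_{m,k}-\theta'_{m,k}|\,2^{-(m+2)/2}\le 2\cdot 2^{-(m+2)/2}.$$
Summing the resulting geometric tail yields
$$\|\Phi(\theta)-\Phi(\theta')\|_\infty\le\sum_{m=N}^\infty 2\cdot 2^{-(m+2)/2}=\sum_{m=N}^\infty 2^{-m/2}=\frac{2^{-N/2}}{1-2^{-1/2}},$$
which tends to $0$ as $N\ua\infty$. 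Since a basic neighbourhood of $\theta$ in the product topology is obtained by fixing its coordinates for all $m<N$, this estimate shows that $\Phi$ is (uniformly) continuous, and the proof is complete.

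The only delicate point is the continuity estimate, and there the crucial observation is the disjoint-support property: at level $m$ there are $2^m$ wedges but, because they do not overlap, their total contribution to the sup-norm is that of a single wedge, namely $2^{-(m+2)/2}$, rather than $2^m$ times as much; this is exactly what makes the tail summable. As an alternative route one could instead invoke Arzel\`a--Ascoli: Theorem~\ref{max osc thm}(a) gives uniform boundedness and the remark following Theorem~\ref{modulus continuity thm} gives uniform equicontinuity via $\sup_{x\in\cX}|x(t+h)-x(t)|\le\sqrt2\,\om(h)\to0$, so that $\cX$ is relatively compact; closedness would then follow by recovering the coefficients $\theta_{m,k}$ from a uniform limit through a diagonal argument, using that they live in the finite set $\{-1,+1\}$. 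The continuous-image argument above is shorter and self-contained, so it is the one I would carry out.
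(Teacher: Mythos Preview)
Your proof is correct and takes a genuinely different route from the paper's. The paper proceeds via Arzel\`a--Ascoli: it invokes Theorem~\ref{max osc thm} for uniform boundedness and Theorem~\ref{modulus continuity thm}(b) for uniform equicontinuity to obtain relative compactness, and then shows closedness by observing that the Faber--Schauder coefficients
\[
a_{m,k}(x)=2^{m/2}\Big(2x\Big(\tfrac{2k+1}{2^{m+1}}\Big)-x\Big(\tfrac{k}{2^m}\Big)-x\Big(\tfrac{k+1}{2^m}\Big)\Big)
\]
depend continuously on $x\in C[0,1]$, so a uniform limit of functions in $\cX$ again has all coefficients in $\{-1,+1\}$. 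Your continuous-image argument is more self-contained: it does not rely on the substantial work behind Theorems~\ref{max osc thm} and~\ref{modulus continuity thm}, but only on the elementary geometric tail bound $\sum_{m\ge N}2^{-m/2}$ coming from the disjoint-support structure, which is essentially the same estimate that establishes uniform convergence of the defining series in the first place. The paper's approach, by contrast, has the virtue of making the uniform bound and the common modulus of continuity explicit, quantities which are of independent interest there. Your closing alternative paragraph in fact anticipates almost exactly the paper's argument, including the recovery of the limiting coefficients; the paper carries out that version rather than your primary one.
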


\medskip

Theorem~\ref{modulus continuity thm} (b) implies moreover that each $x\in\cX$ is H\"older continuous with exponent $\frac12$ and hence admits a finite 2-variation in the sense that\begin{equation}\label{finite 2-variation}
\sup_{\bT}\sum_{t\in\bT}(x(t')-x(t))^2<\infty,
\end{equation}
where the supremum is taken over all partitions $\bT$ of $[0,1]$ and $t'$ denotes the successor of $t$ in $\bT$, i.e., 
$$t'=\begin{cases}\min\{u\in\bT\,|\,u>t\}&\text{if $t<1$,}\\
1&\text{if $t=1$.}
\end{cases}
$$
Each $x\in\cX$ can therefore serve as an integrator in the pathwise integration theory of rough paths; see, e.g., Friz and Hairer~\cite{FrizHairer}. A different pathwise integration theory was proposed earlier by F\"ollmer~\cite{FoellmerIto}. It is based on the following notion of \emph{pathwise quadratic variation}. Instead of considering the supremum over \emph{all} partitions as in \eqref{finite 2-variation}, one fixes an increasing sequence of partitions $\bT_1\subset\bT_2\subset\cdots$ of $[0,1]$ such that the mesh of $\bT_n$ tends to zero; such a sequence $(\bT_n)_{n\in\bN}$ will be called a \emph{refining sequence of partitions}.  For  $x\in C[0,1]$ one then defines the sequence
\begin{equation}
  \label{quadVarApprox}
 \<
x\>_{t}^n:= \sum_{s\in\bT_n,\, s\le  t} 
(x(s')-x(s))^2 .
\end{equation}
The function $x\in C[0,1]$
is  said to \emph{admit the continuous quadratic variation $\<x\>$ along the sequence $(\bT_n)$} if for all $t\in[0,1]$ the limit
\begin{align}\label{quadVarLimit}
\<x\>_{t}:=\lim_{n\ua\infty}\<x\>^n_{t}
\end{align}
exists, and if $t\mapsto\<x\>_{t}$ is a continuous function. F\"ollmer's pathwise It\=o calculus uses this\footnote{Pathwise It\=o calculus also works for c\`adl\`ag  functions $x$, but this requires that the continuous part of $x$ admits a continuous quadratic variation along $(\bT_n)$; see~\cite{FoellmerIto} and~\cite{ContFournieJFA}. For this reason we will concentrate here on the case of continuous functions $x$.} class of functions $x$ as integrators. 

For given $x\in C[0,1]$, the approximations $\<x\>^n_{t}$ are typically not monotone in $n$, and so it is not clear \emph{a priori} whether the limit in \eqref{quadVarLimit} exists. Moreover, even if the limit exists, it may depend strongly on the particular choice of the underlying sequence of partitions. For instance, it is known that for any $x\in C[0,1]$ there exists a refining sequence $(\bT_n)_{n\in\bN}$ of partitions such that the quadratic variation of $x$ along $(\bT_n)_{n\in\bN}$ vanishes identically; see~\cite[p. 47]{Freedman}. It is also not difficult to construct $x\in C[0,1]$ for which the limit in \eqref{quadVarLimit} exists but satisfies $\<x\>_{t}=\Ind{]1/2,1]}(t)$ and is hence discontinuous. On the other hand, it is easy to see that the quadratic variation of a continuous function with bounded variation exists and vanishes along every refining sequence of partitions. Functions that do admit a nontrivial quadratic variation for some refining sequence of partitions must hence be of infinite total variation.

So the first question that arises in this context is how one can obtain functions that do admit a continuous quadratic variation along a given refining  sequence of partitions, $(\bT_n)_{n\in\bN}$? Of course one can take  all sample paths of a Brownian motion (or, more generally, of a continuous semimartingale) that are not contained in a certain null set $A$. But $A$ is generally not given explicitly, and so it is not possible to tell whether a specific realization $x$ of Brownian motion does indeed admit the quadratic variation $\<x\>_{t}=t$ along $(\bT_n)_{n\in\bN}$. Moreover, this selection principle for functions $x$ lets a probabilistic model enter through the backdoor, although the initial purpose of pathwise It\=o calculus was to get rid of probabilistic models altogether.

In the following proposition,  we show that each $x\in\cX$ admits the linear pathwise quadratic variation $\<x\>_t=t$ for $t\in[0,1]$ along the sequence of dyadic partitions: 
\begin{equation}\label{dyadic partitions}
\bT_n:=\{k2^{-n}\,|\,k=0,\dots, 2^n\},\qquad n=1,2,\dots
\end{equation}
 This slightly extends a result by  Gantert~\cite{GantertDiss,Gantert}, from which it follows that $\<x\>_1=1$ for all $x\in\cX$. Our proposition implies that each $x\in\cX$ can serve as an integrator in F\"ollmer's pathwise It\=o calculus.

\medskip

\begin{proposition}\label{Takagi quad var prop}Every $x\in\cX$ admits the quadratic variation $\langle x\>_{t}=t$ along the sequence $(\bT_n)$ from \eqref{dyadic partitions}.\end{proposition}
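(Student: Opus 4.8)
The plan is to reduce everything to the dyadic increments
$$d_{n,j}:=x\big((j+1)2^{-n}\big)-x\big(j2^{-n}\big),\qquad 0\le j<2^n,$$
and to show that they obey a clean recursion under refinement. The starting point is the observation that a Faber--Schauder function $e_{m,k}$ with $m\ge n$ vanishes at every point of $\bT_n$, so that $x$ agrees on $\bT_n$ with its finite partial sum $x^n$ from \eqref{Mount Takagi approx xN}. Consequently each $d_{n,j}$ depends only on the finitely many $\theta_{m,k}$ with $m<n$, and $\<x\>^n_t=\sum_{j:\,j2^{-n}\le t}d_{n,j}^2$. The task is then to track $\sum_j d_{n,j}^2$ (and its partial sums) as $n\ua\infty$.

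First I would establish a \emph{midpoint identity}. Since $x^n$ is affine on $[j2^{-n},(j+1)2^{-n}]$ and coincides with $x$ at the endpoints, its value at the midpoint equals the average $\tfrac12(x(j2^{-n})+x((j+1)2^{-n}))$. Passing to $x^{n+1}$ adds the single new wedge $\theta_{n,j}e_{n,j}$, whose peak value at that midpoint is $2^{-(n+2)/2}$ (all other level-$n$ wedges have disjoint support and vanish there); using $x=x^{n+1}$ on $\bT_{n+1}$ this gives
$$x\big((j+\tfrac12)2^{-n}\big)=\tfrac12\big(x(j2^{-n})+x((j+1)2^{-n})\big)+\theta_{n,j}2^{-(n+2)/2}.$$
Subtracting the two endpoint values yields the refinement recursion
$$d_{n+1,2j}=\tfrac12 d_{n,j}+\theta_{n,j}2^{-(n+2)/2},\qquad d_{n+1,2j+1}=\tfrac12 d_{n,j}-\theta_{n,j}2^{-(n+2)/2}.$$
The crucial point is that upon squaring and adding, the cross terms cancel and all dependence on the sign $\theta_{n,j}$ disappears:
$$d_{n+1,2j}^2+d_{n+1,2j+1}^2=\tfrac12 d_{n,j}^2+2^{-(n+1)}.$$

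Summing this identity over the intervals lying to the left of a fixed dyadic point $q=J2^{-n}$ (so that the $2J$ subintervals at level $n+1$ exhaust those left of $q$) produces the one-step relation $\<x\>^{n+1}_q=\tfrac12\<x\>^n_q+\tfrac{q}{2}$, a contraction whose unique fixed point is $q$; hence $\<x\>^n_q\to q$ for every dyadic $q$ (taking $q=1$ recovers Gantert's $\<x\>_1=1$, with the explicit value $\<x\>^n_1=1-2^{-n}$). To pass to an arbitrary $t\in[0,1]$ I would exploit that $t\mapsto\<x\>^n_t$ is nondecreasing: for dyadic $q_1\le t\le q_2$ one has $\<x\>^n_{q_1}\le\<x\>^n_t\le\<x\>^n_{q_2}$, so letting $n\ua\infty$ and then $q_1\ua t$, $q_2\da t$ squeezes $\<x\>^n_t$ to $t$. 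As the limiting function $t\mapsto t$ is continuous, $x$ admits the quadratic variation $\<x\>_t=t$ along $(\bT_n)$.

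The only genuinely delicate step is the midpoint identity together with the sign cancellation in the squared-increment formula: this cancellation is precisely what forces the quadratic variation to be linear and wholly independent of the chosen signs $\theta_{m,k}$. Everything else---finiteness of the seed value $\<x\>^{n_0}_q$, the geometric decay in the contraction, and the monotonicity squeeze for non-dyadic $t$---is routine.
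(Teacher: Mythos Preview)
Your argument is correct and takes a more elementary, self-contained route than the paper's. The paper first quotes Gantert's formula $\<x\>_1^n=2^{-n}\sum_{m<n}\sum_k\theta_{m,k}^2$ to obtain $\<x\>_1=1$, and then deduces $\<x\>_t=t$ at dyadic $t$ by combining the scaling relations \eqref{Schauder self-similarity} with Remark~\ref{BVQVRemark}: the restriction of $x$ to any dyadic subinterval is, up to an affine correction, a rescaled element of $\cX$, so the result for $t=1$ propagates. You bypass both the citation and the bounded-variation remark by computing directly on increments: the midpoint identity gives $d_{n+1,2j}^2+d_{n+1,2j+1}^2=\tfrac12 d_{n,j}^2+2^{-(n+1)}$, whose sign cancellation makes the independence from the $\theta_{m,k}$ transparent, and summing yields a contraction with fixed point $q$. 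One minor bookkeeping point: under the convention $s\le t$ that you (and the paper) adopt, the one-step relation $\<x\>^{n+1}_q=\tfrac12\<x\>^n_q+\tfrac{q}{2}$ for $q=J2^{-n}<1$ is off by a single boundary term of size $d_{n,J}^2=O(n2^{-n})$, which does not affect the limit; the relation is exact if one instead sums over intervals contained in $[0,q]$, as your ``$2J$ subintervals'' phrasing actually suggests. Both proofs finish with the same monotone sandwich for non-dyadic $t$.
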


 \medskip
 
The second question that we will address in this context is concerned with a standard assumption that is made in pathwise It\=o calculus whenever the covariation of two functions $x,y\in C[0,1]$ is needed. Let
\begin{align}\label{}
\<x,y\>_{t}^n:=\sum_{s\in\bT_n,\, s\le t}(x(s')-x(s))(y(s')-y(s))
\end{align}
and observe that 
\begin{align}\label{polarization in approx}
\<x,y\>_{t}^n=\frac12\Big(\<x+y\>_{t}^n-\<x\>_{t}^n-\<y\>_{t}^n\Big).
\end{align}
If $x$ and $y$ admit the continuous quadratic variations $\<x\>$ and $\<y\>$ along $(\bT_n)_{n\in\bN}$, then it follows from \eqref{polarization in approx} that 
the \emph{covariation of $x$ and $y$},
\begin{align}\label{covariation limit}
\<x,y\>_{t}:=\lim_{n\ua\infty} \<x,y\>_{t}^n,
\end{align}
exists along $(\bT_n)_{n\in\bN}$ and is continuous in $t$ if and only if $x+y$ admits a  continuous quadratic variation  along $(\bT_n)_{n\in\bN}$. When $x$ and $y$ are  sample paths of Brownian motion or, more generally, of a continuous semimartingale, the quadratic variation $\<x+y\>$, and hence $\<x,y\>$, will always exist almost surely. 
But for arbitrary functions $x,y\in C[0,1]$ it has so far not been possible to reduce the existence of the limit in \eqref{covariation limit} to the existence of $\<x\>$ and $\<y\>$.  In our next proposition we will provide an example of two functions $x,y\in\cX$ for which the limit in \eqref{covariation limit}  does not exist, even though $\<x\>_{t}=t=\<y\>_{t}$. This shows that the existence of $\<x,y\>$ and $\<x+y\>$ is not implied by the existence of $\<x\>$ and $\<y\>$. It follows in particular that the class of functions that admit a continuous quadratic variation along $(\bT_n)_{n\in\bN}$ is not a vector space. To the knowledge of the author, a corresponding example has so far been missing from the literature.

 \medskip
 
 \begin{proposition}\label{MountTakagi_var_prop}Consider the sequence $(\bT_n)_{n\in\bN}$ of dyadic partitions \eqref{dyadic partitions} and the functions $$\wh x=\sum_{m=0}^\infty\sum_{k=0}^{2^m-1}e_{m,k}\qquad\text{and}\qquad y=\sum_{m=0}^\infty\sum_{k=0}^{2^m-1}(-1)^me_{m,k},
$$
which belong to $\cX$ and hence admit the quadratic variation $\langle \wh x\>_{t}=t=\<y\>_{t}$ along $(\bT_n)$. Then
\begin{align}\label{MountTakagi_var_prop eq1}
\lim_{n\ua\infty}\<\wh x+y\>^{2n}_t=\frac43t\qquad\text{and}\qquad\lim_{n\ua\infty}\<\wh x+y\>^{2n+1}_t=\frac83t
\end{align}
and
\begin{align}\label{MountTakagi_var_prop eq2}
\lim_{n\ua\infty}\<\wh x,y\>^{2n}_t=-\frac13t\qquad\text{and}\qquad\lim_{n\ua\infty}\<\wh x,y\>^{2n+1}_t=\frac13t.
\end{align}
In particular, for $t>0$, the limits of $\<\wh x+y\>^n_t$ and $\<\wh x,y\>_t^n$ do not exist as $n\ua\infty$, but $\wh x+y$ admits different continuous quadratic variations along the two refining sequences $(\bT_{2n})_{n\in\bN}$ and $(\bT_{2n+1})_{n\in\bN}$.
 \end{proposition}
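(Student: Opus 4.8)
The plan is to reduce everything to a single computation of the pathwise quadratic variation of $z:=\wh x+y$ and then recover the covariation by polarization. Since $\wh x,y\in\cX$, Proposition~\ref{Takagi quad var prop} gives $\<\wh x\>^n_t\to t$ and $\<y\>^n_t\to t$, so by the identity \eqref{polarization in approx} the covariation statement \eqref{MountTakagi_var_prop eq2} follows immediately from the quadratic-variation statement \eqref{MountTakagi_var_prop eq1}: along the even subsequence $\frac12(\frac43t-t-t)=-\frac13t$, and along the odd one $\frac12(\frac83t-t-t)=\frac13t$. Thus the whole proposition reduces to establishing \eqref{MountTakagi_var_prop eq1}. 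The decisive simplification is that in $z=\wh x+y$ the odd levels cancel while the even levels double, so that $z=2\sum_{m\ \mathrm{even}}\sum_{k=0}^{2^m-1}e_{m,k}$.

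Next I would compute the dyadic increments of $z$. Writing $\Delta_{n,j}z:=z((j+1)2^{-n})-z(j2^{-n})$, I use the standard Faber--Schauder fact that $e_{m,k}$ vanishes at every dyadic point of level $n$ as soon as $m\ge n$; hence $\Delta_{n,j}z$ only feels the levels $m<n$. On each such level exactly one wedge $e_{m,k}$ is active on $[j2^{-n},(j+1)2^{-n}]$, and it is affine there with slope $\pm2^{m/2}$, the sign being $+$ or $-$ according to whether the interval lies in the increasing or the decreasing half of the wedge. A short bookkeeping argument identifies this sign with $(-1)^{\beta}$, where $\beta$ is the binary digit of $j$ in position $n-1-m$. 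Keeping only even levels and the prefactor $2$, this yields
$$\Delta_{n,j}z=2^{1-n}\sum_{\substack{0\le m<n\\ m\ \mathrm{even}}}(-1)^{\beta_{n-1-m}(j)}\,2^{m/2},$$
where $\beta_i(j)$ denotes the $i$-th binary digit of $j$.

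The core of the argument is then to square this and sum. Writing $(\Delta_{n,j}z)^2$ as a double sum over even levels $m,m'$, I split it into the diagonal ($m=m'$) and off-diagonal ($m\ne m'$) parts. The diagonal part $2^{2-2n}\sum_{m\ \mathrm{even},\,m<n}2^{m}$ is independent of $j$; summing it over the $\approx t2^n$ indices $j$ with $j2^{-n}\le t$ and evaluating the resulting geometric series separately for $n=2N$ and $n=2N+1$ produces exactly $\frac43t$ and $\frac83t$ in the limit. In the off-diagonal part each summand carries a factor $(-1)^{\beta_a(j)+\beta_b(j)}$ with $a\ne b$; this is a product of two distinct binary digits, so its sum over any full dyadic block of length $2^{\max(a,b)+1}$ vanishes, and the partial sum over $\{j:j2^{-n}\le t\}$ is therefore bounded by $2^{\max(a,b)+1}$. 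Feeding this cancellation bound into the double sum gives an estimate of order $n\,2^{-n/2}$ for the total off-diagonal contribution, which vanishes as $n\ua\infty$.

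The main obstacle is precisely this control of the off-diagonal terms: one must show that the oscillating cross-products wash out uniformly enough that only the $j$-independent diagonal survives, and it is the even/odd parity of the diagonal geometric series that then becomes visible in the limit. Two minor points remain: for non-dyadic $t$ one notes that each interval contributes only $O(2^{-n})$ to the diagonal, so replacing $t2^n$ by $\lfloor t2^n\rfloor$ is harmless; and since the limits $\frac43t$ and $\frac83t$ are continuous in $t$, the function $z=\wh x+y$ indeed admits (distinct) continuous quadratic variations along the two refining sequences $(\bT_{2n})_{n\in\bN}$ and $(\bT_{2n+1})_{n\in\bN}$, while for $t>0$ the full sequence $(\bT_n)$ yields no limit. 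This completes \eqref{MountTakagi_var_prop eq1}, and with it, via polarization, \eqref{MountTakagi_var_prop eq2}.
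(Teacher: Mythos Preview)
Your argument is correct and complete; the identification of the increment signs with binary digits, the diagonal/off-diagonal splitting, and the cancellation bound $O(n\,2^{-n/2})$ for the cross terms all check out.

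Your route, however, is genuinely different from the paper's. The paper works with the covariation first: it derives the one-step recursion
\[
\<\wh x,y\>^{n+1}_1=\tfrac12\big(\<\wh x,y\>^{n}_1+(-1)^{n+1}\big)
\]
by inserting the midpoint of each dyadic interval and using that both $\wh x^{n+1}$ and $y^{n+1}$ shift by the wedge height $\Delta_{n+1}=2^{-(n+2)/2}$ (with opposite signs for $y$ when $n$ is even). This recursion immediately gives the two limits $\pm\tfrac13$ for $t=1$; polarization then yields \eqref{MountTakagi_var_prop eq1} at $t=1$, and the extension to all $t$ is obtained from the self-similarity $\wh x+y=f(4\cdot)+\tfrac12(\wh x+y)(4\cdot)$ on $[0,\tfrac14]$ together with Remark~\ref{BVQVRemark}, exactly as in the proof of Proposition~\ref{Takagi quad var prop}. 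In contrast, you bypass the recursion and self-similarity entirely by writing the increment of $z=\wh x+y$ explicitly in terms of the binary digits of $j$ and summing directly for every $t$. Your approach is more hands-on and yields all $t$ in one stroke, at the cost of the off-diagonal estimate; the paper's approach is shorter and structural, reusing the machinery of Proposition~\ref{Takagi quad var prop}, but has to treat $t=1$ and general $t$ in separate steps.
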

 
 \medskip
 
See the two upper panels in Figure~\ref{Mount_TakagiFig} for plots of the two functions $\wh x$ and $y$ occurring in Proposition 
\ref{MountTakagi_var_prop}. See Figure~\ref{Mount_Takagi_moQV Fig} for a plot of 
\begin{equation}\label{Mount_Takagi_moQV eq}
\wh x+y=\sum_{m=0}^\infty\sum_{k=0}^{2^{2m}-1}2e_{2m,k}.
\end{equation}

\begin{figure}[htbp]
\begin{center}
\includegraphics[width=10cm]{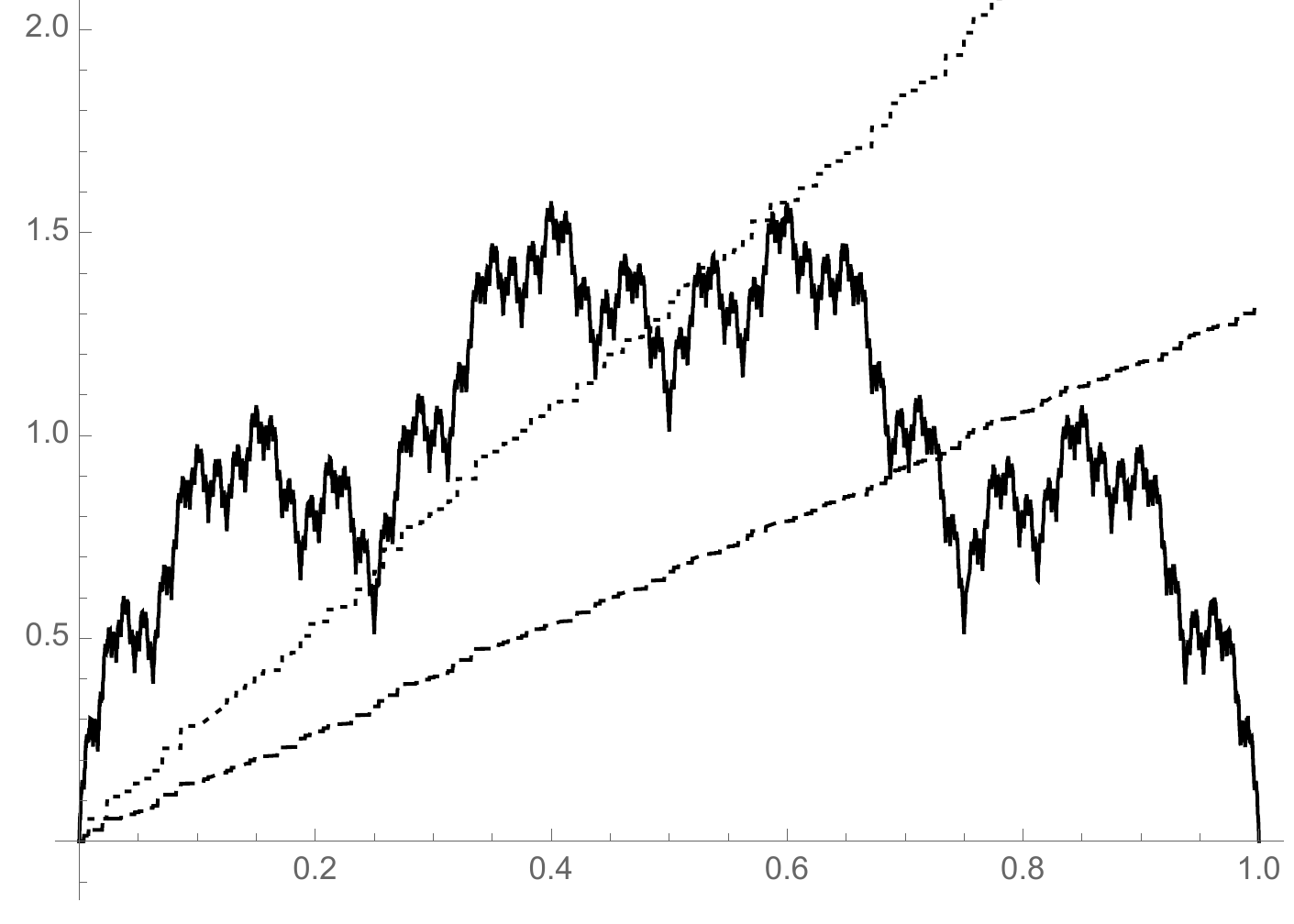}
\caption{Plot of the function $\wh x+y$ defined in \eqref{Mount_Takagi_moQV eq}. The dotted line is $\<\wh x+y\>^7$, the dashed line is $\<\wh x+y\>^8$.}\label{Mount_Takagi_moQV Fig}
\end{center}
\end{figure}

\section{Proofs}\label{ProofsSection}

\subsection{Proof of Theorem~\ref{max osc thm} }

We start with the following lemma, which computes  maxima and maximizers of the functions 
$$\wh x^n(t):=\sum_{m=0}^{n-1}\sum_{k=0}^{2^m-1}e_{m,k}(t),\qquad t\in[0,1]\text{ and }n=1,2,\dots;
$$
see Figure \ref{Jacobsthal Fig}
 for an illustration.
\begin{figure}[htbp]
\begin{center}
\begin{overpic}[width=12cm]{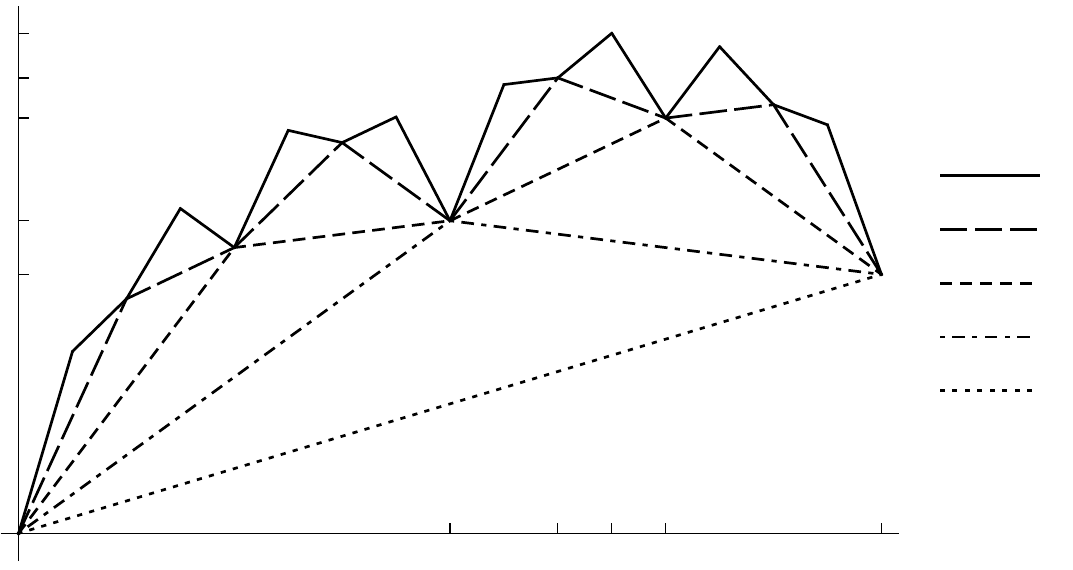}
\put(41,0){\scriptsize{$t_2$}}
\put(51,0){\scriptsize{$t_4$}}
\put(56,0){\scriptsize{$t_5$}}
\put(61,0){\scriptsize{$t_3$}}
\put(79,0){\scriptsize{$t_1=\frac12$}}
\put(-2.5,27){\scriptsize{$M_1$}}
\put(-2.5,32){\scriptsize{$M_2$}}
\put(-2.5,41){\scriptsize{$M_3$}}
\put(-2.5,45){\scriptsize{$M_4$}}
\put(-2.5,49.5){\scriptsize{$M_5$}}
\put(98,16.2){\scriptsize{$n=1$}}
\put(98,21.2){\scriptsize{$n=2$}}
\put(98,26.2){\scriptsize{$n=3$}}
\put(98,31.2){\scriptsize{$n=4$}}
\put(98,36.2){\scriptsize{$n=5$}}
\end{overpic}
\caption{Illustration of   Lemma~\ref{Jacobsthal lemma} and its proof. The functions ${\wh x}^n$ are plotted over the interval  $[0,1/2]$  for various values of $n$, together with the corresponding values for $t_n$ and $M_n$. }\label{Jacobsthal Fig}
\end{center}
\end{figure}

\begin{lemma}\label{Jacobsthal lemma}
Let 
$$J_n:=\frac13(2^n-(-1)^n)
$$
be the sequence of Jacobsthal numbers, and let 
\begin{align}\label{mN formula}
M_n:=\frac13\Big(2+\sqrt2+(-1)^{n+1}2^{-n}(\sqrt2-1)\Big)-2^{-n/2}.
\end{align}
The function ${\wh x}^n$ has  two maximal points  given by
\begin{equation}\label{tn eq}
t_n^-:=2^{-n}J_n\in[0,1/2]\qquad\text{and}\qquad t_n^+:=1-t_n^-\in[1/2,1],
\end{equation}
these are the only maximal points, 
and the global maximum of ${\wh x}^n$ is given by
$$\max_{t\in[0,1]}{\wh x}^n(t)={\wh x}^n(t^-_n)={\wh x}^n(t^+_n)=M_n.$$
\end{lemma}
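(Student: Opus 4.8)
The plan is to exploit the self-similarity of $\wh x^n$ and argue by induction on $n$, the base case $n=1$ being the single tent $\wh x^1=e_{0,0}$ (unique maximizer $t=1/2=t_1^-=t_1^+$, value $1/2=M_1$). Everything rests on two identities that follow directly from the definition of the $e_{m,k}$: for $n\ge 2$ and $s\in[0,1]$ one has the \emph{scaling recursion}
$$\wh x^n(s/2)=\frac s2+\frac1{\sqrt2}\,\wh x^{n-1}(s)\qquad(\star)$$
together with the \emph{symmetry} $\wh x^n(t)=\wh x^n(1-t)$. I would obtain $(\star)$ by noting that on $[0,1/2]$ one has $e_{0,0}(s/2)=s/2$, that the tents with $k\ge 2^{m-1}$ vanish there, and that $e_{m,k}(s/2)=2^{-1/2}e_{m-1,k}(s)$ for the remaining ones, after which reindexing $m\mapsto m-1$ produces $\wh x^{n-1}$; the symmetry comes from $e_{m,k}(1-t)=e_{m,\,2^m-1-k}(t)$ and summing over $k$.

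By $(\star)$ and the symmetry, finding $\max\wh x^n$ reduces to maximizing $g_n(s):=\tfrac s2+\tfrac1{\sqrt2}\wh x^{n-1}(s)$ over $[0,1]$, and the crucial heuristic is that adding the increasing term $\tfrac s2$ moves the maximizer away from the left peak $t_{n-1}^-$ of $\wh x^{n-1}$ onto its right peak $t_{n-1}^+$. Concretely, the induction should carry, alongside the statement of the lemma, the \emph{chord bound}
$$\wh x^{n}(t)\le M_{n}-\frac1{\sqrt2}\,(t-t_{n}^+)\quad\text{for }t\in[t_{n}^+,1],\qquad(\mathrm R_n)$$
equivalently (by symmetry) $\wh x^n(u)\le M_n-\tfrac1{\sqrt2}(t_n^--u)$ on $[0,t_n^-]$. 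Granting $(\mathrm R_{n-1})$, I would prove the maximizer part as follows. For $s\le t_{n-1}^+$,
$$g_n(t_{n-1}^+)-g_n(s)=\frac12\,(t_{n-1}^+-s)+\frac1{\sqrt2}\bigl(M_{n-1}-\wh x^{n-1}(s)\bigr)\ge 0,$$
with strict inequality once $s<t_{n-1}^+$; for $s>t_{n-1}^+$, bound $(\mathrm R_{n-1})$ gives $M_{n-1}-\wh x^{n-1}(s)>\tfrac1{\sqrt2}(s-t_{n-1}^+)$, which makes the same difference strictly positive. Hence $g_n$ has its unique maximum at $s=t_{n-1}^+$, so $\wh x^n$ has on $[0,1/2]$ the unique maximizer $t_n^-=t_{n-1}^+/2$ with value $M_n=g_n(t_{n-1}^+)=\tfrac12 t_{n-1}^++\tfrac1{\sqrt2}M_{n-1}$; the symmetry then supplies $t_n^+=1-t_n^-$ and, since $t_n^-<1/2$ for $n\ge2$, rules out $t=1/2$, giving exactly two maximizers.

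To close the loop I would verify $(\mathrm R_n)$ itself, which by symmetry and $(\star)$ amounts, for $s\in[0,t_{n-1}^+]$, to the inequality $\tfrac1{\sqrt2}\bigl(\wh x^{n-1}(s)-M_{n-1}\bigr)\le\tfrac12\bigl(1-\tfrac1{\sqrt2}\bigr)(t_{n-1}^+-s)$; here the left-hand side is $\le 0$ while the right-hand side is $\ge 0$, so $(\mathrm R_n)$ holds automatically from $\wh x^{n-1}\le M_{n-1}$. Finally the closed forms follow from the recursions: using the Jacobsthal identity $J_n+J_{n-1}=2^{n-1}$ (immediate from $(-1)^n+(-1)^{n-1}=0$) one gets $t_n^-=\tfrac12(1-2^{-(n-1)}J_{n-1})=2^{-n}J_n$, and a routine check confirms that the stated formula for $M_n$ solves $M_n=\tfrac12 t_{n-1}^++\tfrac1{\sqrt2}M_{n-1}$ with $M_1=\tfrac12$. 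The main obstacle is the single delicate point of the whole argument: ensuring the maximizer of $g_n$ does not slip strictly to the right of $t_{n-1}^+$. This is exactly governed by the descent rate of $\wh x^{n-1}$ past its peak, and the threshold slope $\tfrac1{\sqrt2}=(\tfrac12)/(\tfrac1{\sqrt2})$ in $(\mathrm R_n)$ is precisely the value at which the linear perturbation $\tfrac s2$ and the scaling factor $\tfrac1{\sqrt2}$ balance—which is also why this bound is self-propagating rather than degrading from step to step.
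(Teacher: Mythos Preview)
Your proof is correct and takes a genuinely different route from the paper's. The paper argues by a two-step induction based on the decomposition $\wh x^{n+1}=\wh x^{n-1}+\sum_k f_{n-1,k}$ with $f_{m,k}=e_{m,k}+e_{m+1,2k}+e_{m+1,2k+1}$: since $\wh x^{n-1}$ is linear on the support of each $f_{n-1,k}$, the new maximum must sit at the peak of some $e_{n,\ell}$, and bounding the two endpoint values of $\wh x^{n}$ on that support by $M_n$ and $M_{n-1}$ gives $\wh x^{n+1}(s^*)\le\tfrac12(M_n+M_{n-1})+2^{-(n+2)/2}$, with equality only at the midpoint of $t_n$ and $t_{n-1}$. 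You instead use the one-step scaling recursion $(\star)$ together with symmetry to reduce to maximizing $g_n(s)=\tfrac s2+\tfrac1{\sqrt2}\wh x^{n-1}(s)$, and the new ingredient is the chord bound $(\mathrm R_n)$, calibrated precisely so that the descent rate of $\wh x^{n-1}$ past its right peak beats the linear drift $\tfrac s2$ after the $\tfrac1{\sqrt2}$ scaling. Your route is arguably cleaner: it avoids bookkeeping over dyadic peaks and delivers the recursions $t_n^-=\tfrac12 t_{n-1}^+$ and $M_n=\tfrac12 t_{n-1}^++\tfrac1{\sqrt2}M_{n-1}$ directly, whereas the paper's approach stays closer to the piecewise-linear geometry and makes the role of the Faber--Schauder hierarchy more visible.

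One small presentational point: you invoke $(\mathrm R_{n-1})$ with \emph{strict} inequality for $s>t_{n-1}^+$ to rule out maximizers to the right, but you state $(\mathrm R_n)$ only with $\le$. Your own verification of $(\mathrm R_n)$ already yields the strict form for $t>t_n^+$ (there the left-hand side is $\le0$ while the right-hand side is $>0$), and $(\mathrm R_1)$ is strict on $(1/2,1]$ because $e_{0,0}$ has slope $-1<-\tfrac1{\sqrt2}$ there; it would be cleaner to carry the strict version in the induction hypothesis from the start.
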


\begin{proof}We first note that by the symmetry of ${\wh x}^n$ with respect to $t=1/2$ and the fact that $t_n^+=1-t_n^-$ it is sufficient to prove the result for the restriction of ${\wh x}^n$ to $[0,\frac12]$.
We will therefore simply write $t_n$ in place of $t_n^-$. 

We will prove the assertion by induction on $n$.  First, for $n=1$, we have ${\wh x}^n(t)= e_{0,0}(t)= \min\{t,1-t\}$, which is maximized at $t=1/2=2^{-1}J_1$ and has the maximum $1/2=M_1$. Given that $t_1=1/2$,  our claim that $t_n=2^{-n}J_n$ is now equivalent to the recursion
\begin{align}\label{Jacobsthal recursion}
t_{n+1}=t_n+(-1)^n2^{-(n+1)}.
\end{align}
 For $n=2$, the new maximum is taken at the peak of $e_{1,0}$, which is attained at $t=1/4=t_1-2^{-2}$. Hence  \eqref{Jacobsthal recursion} follows for $n=2$. Moreover,
 $$\max_{t\in[0,1/2]}\wh x^2(t)=\wh x^2(1/4)=\frac14+2^{-3/2}=M_2.
 $$
 Clearly, the maximizers of $\wh x^1$ and $\wh x^2$ on $[0,1/2]$ are unique. 


 Now we  assume that $n\ge2$ and that the assertion has been established for  $ n$ and $n-1$. 
When letting
 \begin{align}\label{fmk eqn}
 f_{m,k}:=e_{m,k}+e_{m+1,2k}+e_{m+1,2k+1},
 \end{align}
the function  $\wh x^{n+1}$ can be obtained from
 $$\wh x^n=\wh x^{n-1}+\sum_{k=0}^{2^{n-1}-1}e_{n-1,k}
  $$ 
by replacing   all Faber--Schauder functions $e_{n-1,k}$ with the corresponding functions $f_{n-1,k}$, i.e., 
  $$\wh x^{n+1}=\wh x^{n-1}+\sum_{k=0}^{2^{n-1}-1}f_{n-1,k}.
  $$ 
  It follows from our induction hypothesis and the formula for $t_n$ that the maximum of $\wh x^n$ is attained at the peak of some function $e_{n-1,k}$. 
    Clearly, the support of $f_{n-1,k}$ coincides with the support of $e_{n-1,k}$, while the function   $\wh x^{n-1}$ is linear on that support.
  Moreover, the function $f_{n-1,k}$ has two maxima at $t_n-2^{-(n+1)} $  and $t_n+2^{-(n+1)} $, and these maxima are strictly bigger than the one of  $e_{n-1,k}$;
    see
Figure \ref{fmk fig} for an illustration. 
It therefore follows that either $\wh x^{n+1}(t_n+2^{-(n+1)})$ or $\wh x^{n+1}(t_n-2^{-(n+1)})$ must be strictly larger than $\wh x^{n}(t_n)=\max_{t}\wh x^n(t)$. Hence, the maximum of $\wh x^{n+1}$ must be attained at the peak of some Faber--Schauder function $e_{n,\ell}$ for a certain $\ell\in\{0,\dots, 2^n-1\}$. The support of  $e_{n,\ell}$  is an interval with endpoints $s$ and $s'$, and the peak of $e_{n,\ell}$ is located at $s^*=(s+s')/2$ and has height $2^{-(n+2)/2}$. The function $\wh x^{n}$, on the other hand, is linear on the support of  $e_{n,\ell}$ so that the maximum of $\wh x^{n+1}$ must satisfy
\begin{align*}
\max_{t\in[0,1/2]}\wh x^{n+1}(t)=\wh x^{n+1}(s^*)=\wh x^{n}(s^*)+e_{n,\ell}(s^*)=\frac{\wh x^n(s)+\wh x^n(s')}2+2^{-\frac{n+2}2}.
\end{align*}
At one of the endpoints of the interval enclosed by $s$ and $s'$, say at $s'$, the function $\wh x^n$ coincides with $\wh x^{n-1}$, and therefore the value $\wh x^n(s')$ can be estimated from above by $M_{n-1}$. The value $\wh x^n(s)$, on the other hand, can be estimated from above by $M_n$. We hence arrive at 
\begin{align}\label{Jacobsthal max inequality}
\wh x^{n+1}(s^*)\le\frac{M_n+M_{n-1}}2+2^{-\frac{n+2}2}=M_{n+1}.
\end{align}
But we can achieve equality in \eqref{Jacobsthal max inequality} when taking for $s^*$  the midpoint between $t_n$ and $t_{n-1}$, which is possible since, inductively by  \eqref{Jacobsthal recursion},  $t_n$ and $t_{n-1}$  enclose the interval of support of some Faber--Schauder function of generation $n$. When  making this choice, we have moreover that
$$s^*=\frac{t_n+t_{n-1}}2=\frac{t_n+t_n-(-1)^{n-1}2^{-n}}2=t_{n+1}.
$$
This proves that $\wh x^{n+1}$ has maximum $M_{n+1}$ and is maximized at $t_{n+1}$. Moreover, $t_{n+1}$ is the unique maximizer of $\wh x^{n+1}$ in $[0,1/2]$, since, by induction hypothesis, $s^*=(t_n+t_{n-1})/2$ is the only point at which equality can hold in \eqref{Jacobsthal max inequality}.
\end{proof}
 \begin{figure}[htbp]
 \begin{center}
 \begin{overpic}[width=9cm]{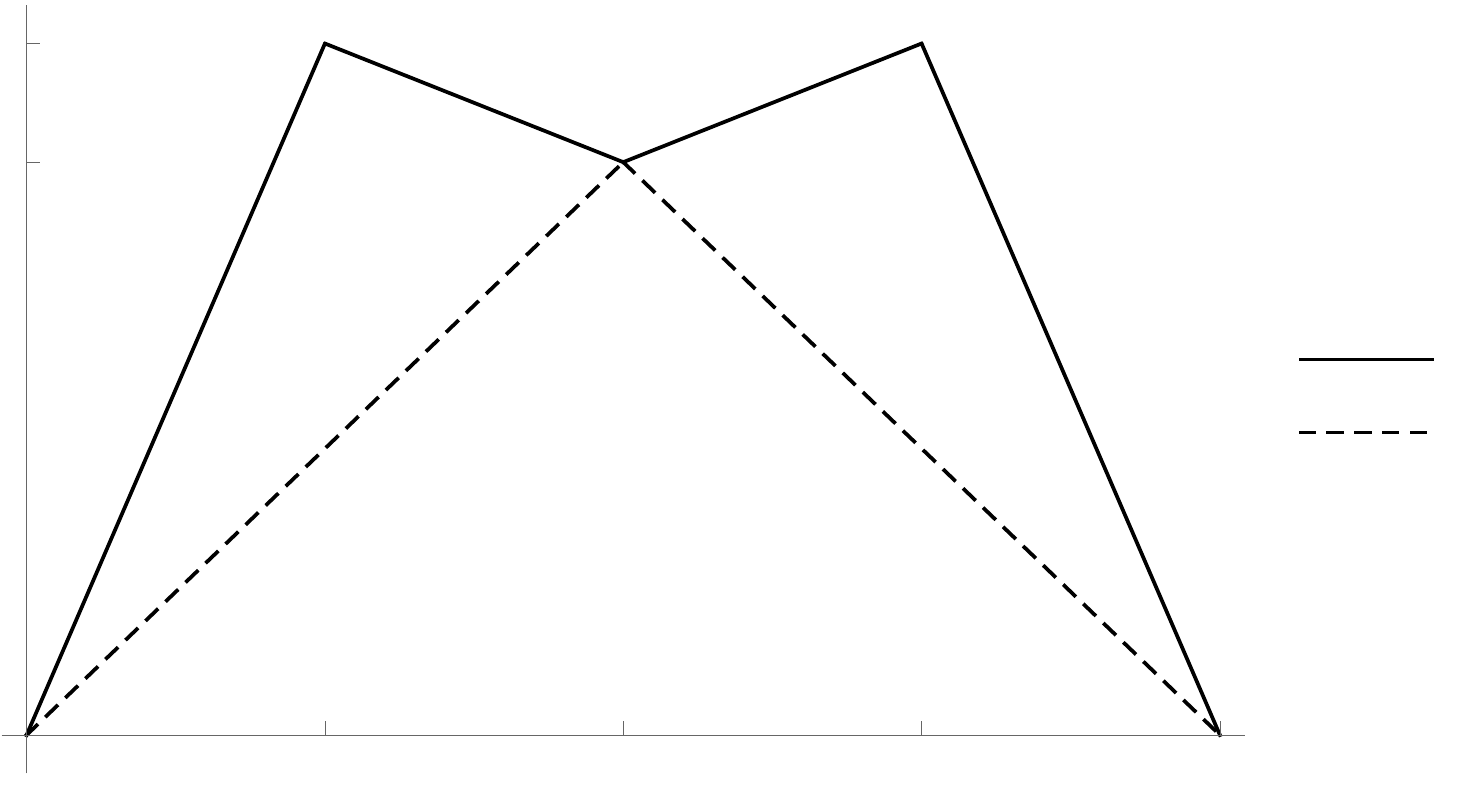}
  \put(0,0){\scriptsize{$k2^{-m} $}}
 \put(16,0){\scriptsize{$(k+\frac14)2^{-m} $}}
 \put(37,0){\scriptsize{$(k+\frac12)2^{-m} $}}
 \put(57,0){\scriptsize{$(k+\frac34)2^{-m} $}}
  \put(77,0){\scriptsize{$(k+1)2^{-m} $}}
 \put(-9.5,42){\scriptsize{$2^{-\frac{m+2}2} $}}
 \put(-22,50){\scriptsize{$(1+\sqrt2)2^{-\frac{m+4}2} $}}
 \put(98,24.5){\scriptsize{$e_{m,k} $}}
 \put(98,29.5){\scriptsize{$f_{m,k} $}}
 \end{overpic}
  \end{center}
   \caption{The function $f_{m,k}$ defined in \eqref{fmk eqn}, plotted alongside $e_{m,k}$.}
 \label{fmk fig}
 \end{figure}

\noindent{\it Proof of Theorem~\ref{max osc thm}.} Part (a): For~$t$ fixed, $x(t)=\sum_{m=0}^{\infty}\sum_{k=0}^{2^m-1}\theta_{m,k}e_{m,k}(t)$ is maximized by taking all coefficients $\theta_{m,k}$ equal to~$+1$, i.e., $|x(t)|\le|\wh x(t)|$ for all $x\in\cX$ and $t\in[0,1]$. The statement on maximum and maximizers of $\wh x$ 
 follows immediately from Lemma~\ref{Jacobsthal lemma} by  noting that $M_n\to\frac13(2+\sqrt2)$ and $t_n^-\to\frac13$ as $n\ua\infty$. 
 
 Part (b): We first show that the maximal oscillation of an arbitrary $x\in\cX$ is dominated by the maximal oscillation of $x^*$. See Figure \ref{xstar fig} for a plot of $x^*$. To this end, we may assume without loss of generality that the coefficient $\theta_{0,0}$ in the Faber--Schauder development of $x$ is equal to $+1$. Next, we note  that for $0\le s\le1/2$, \begin{align}
 x(s)\le\wh x(s)=x^*(s) 
  \end{align}
  and
  \begin{align} x(s)\ge e_{0,0}(s)-\big(\wh x(s)-e_{0,0}(s)\big)=2e_{0,0}(1-s)-\wh x(1-s)=x^*(1-s).
 \end{align}
 For $1/2\le t\le1$, we get in the same manner that 
 \begin{align}
 x^*(t)\le x(t)\le x^*(1-t).
 \end{align}
 It follows that
 \begin{align}\label{separation identity}
\max_{s,t\in[0,1]}|x(t)-x(s)|\le\max_{s\in[0,1/2]}\max_{ t\in[1/2,1]}|x^*(t)-x^*(s)|,
\end{align}
and by taking $x=x^*$ we see that the right-hand side is in fact  equal to $\max_{s,t\in[0,1]}|x^*(t)-x^*(s)|$.
 
 To compute the right-hand side of \eqref{separation identity}, note that we have $x^*=\wh x$ on $[0,1/2]$, and so the maximum of $x^*$ is attained in $s=1/3$ with value $\frac13(2+\sqrt2)$, due to the first part of this theorem. Moreover, on $[1/2,1]$ we have that $-x^*(t)=\wh x({t-1/2})-1/2$. It therefore follows from part (a) of the theorem that the minimum of $x^*$ is attained at $t=5/6$ with minimal value $\frac12-\frac13(2+\sqrt2)$; see Figure \ref{xstar fig}. Therefore,
$$\max_{s\in[0,1/2]}\max_{ t\in[1/2,1]}|x^*(t)-x^*(s)|=\frac23(2+\sqrt2)-\frac12=\frac56+\frac{\sqrt8}3.\eqno{\qedsymbol}
$$

\begin{figure}[htbp]
 \begin{center}
\begin{overpic}[width=9cm]{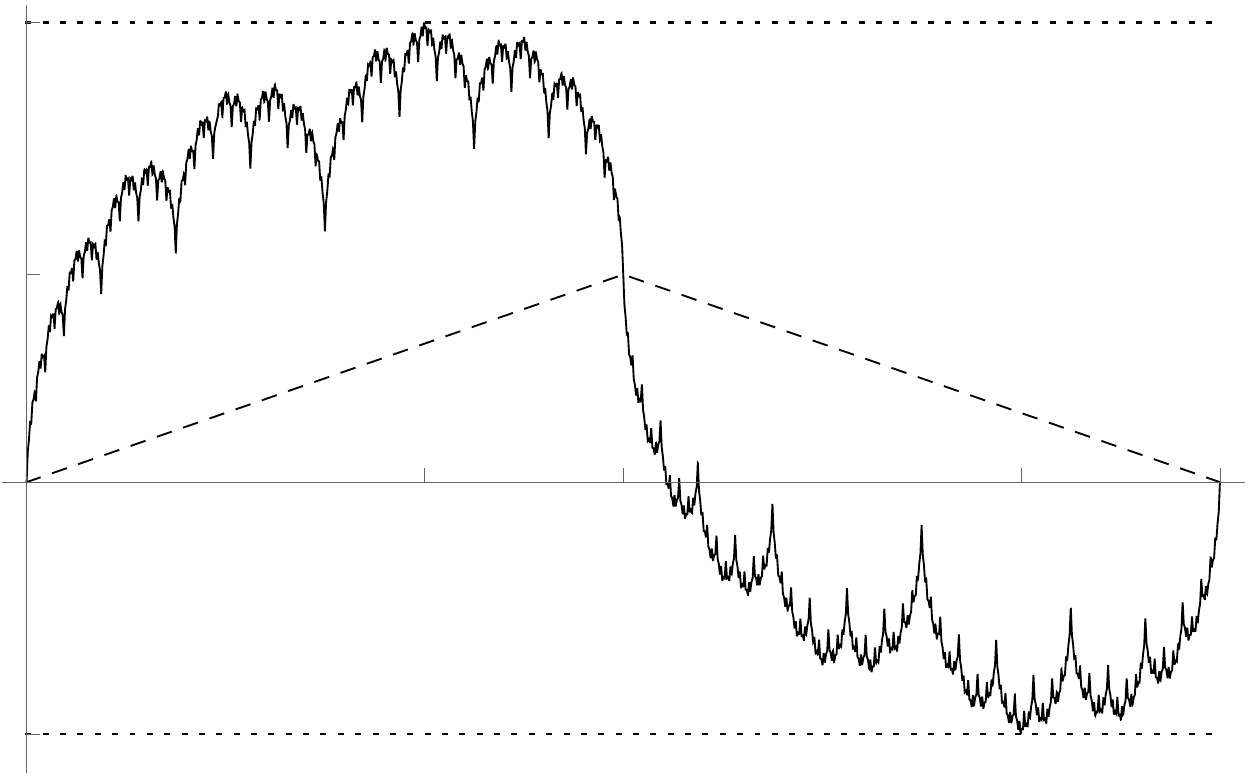}
  \put(0,21){\scriptsize{$0 $}}
  \put(31.6,21){\scriptsize{$1/3 $}}
  \put(47,21){\scriptsize{$1/2 $}}
   \put(79,21){\scriptsize{$5/6 $}}
   \put(-20,2.5){\scriptsize{$\frac12-\frac13(2+\sqrt2) $}}
  \put(-0.5,39.5){\scriptsize{$\frac12$}}
  \put(-13.5,59.5){\scriptsize{$\frac13(2+\sqrt2) $}}
\end{overpic}
  \end{center}
   \caption{The function $x^*$ with its maximum and minimum, and the function $e_{0,0}$ (dashed).}
 \label{xstar fig}
 \end{figure}

\subsection{Proofs of Theorem~\ref{modulus continuity thm} and of Corollary~\ref{compact cor}}

Let $h>0$ be given, and define $n:=\nu(h)$ so that $2^{-(n+1)}<h\le 2^{-n}$.
   Faber--Schauder functions  $e_{m,k}$ can be linear with slope $\pm2^{m/2}$ on intervals of length  $2^{-(m+1)}$. Using this fact for $m\le n-2$ and for the interval $[t,t+2^{-n}]$, we get
  \begin{align}\label{mod cont initial estimate}
|x(t+h)-x(t)|&\le \sum_{m=0}^{n-2}h2^{m/2}+\bigg|\sum_{m=n-1}^\infty\sum_{k=0}^{2^m-1}\theta_{m,k}\Big(e_{m,k}(t+h)-e_{m,k}(t)\Big)\bigg|,
\end{align}
where $\theta_{m,k}$ are the coefficients in the Faber--Schauder development of $x$.
The left-hand sum is equal to $h(1+\sqrt2)(2^{(n-1)/2}-1)$. 
This observation suffices in the situation of~\cite{HataYamaguti}  and~\cite{Allaartflexible} to determine the corresponding moduli of continuity, because in these papers the Faber--Schauder coefficients are such that the sum on the right-hand side can be asymptotically neglected. 

In our case, the sum on the right-hand side of \eqref{mod cont initial estimate} will turn out to be of the same order as $h2^{\nu(h)}$ and therefore cannot be neglected. To deal with it, we note that the Faber--Schauder functions have the following scaling properties:
\begin{align}\label{Schauder self-similarity}
\sqrt 2 e_{m,k}(t)=e_{{m-1},k}(2t)\qquad \text{and}\qquad e_{m,k}(t-\ell2^{-m})=e_{m,k+\ell}(t)
\end{align}
for $t\in\bR$, $m\ge0$, and $k,\ell\in\bZ$. 
For $t\in[0,1-2^{-n})$ given, let $\ell\in\bN_0$ be such that $\ell 2^{-n}\le t<(\ell+1)2^{-n}$ and define $s:=t-\ell 2^{-n}$. Then  $ 0\le  2^{n-1}s<1/2$ and $ 1/4<2^{n-1}(s+h)<1$. The scaling properties \eqref{Schauder self-similarity}
 imply that for $m\ge n$,
 \begin{align}\label{Faber-Schauder rescaling}
 e_{m,k}(t+h)-e_{m,k}(t)=2^{\frac{1-n}2}\big(e_{m-(n-1), k-\ell2^{m-n}}( 2^{n-1}(s+h))-e_{m-(n-1), k-\ell2^{m-n}}( 2^{n-1}s)\big).
 \end{align}
The case $m=n-1$ needs additional care, depending on whether $\ell$ is  even or odd. 

{\it Case 1: $\ell$ is even.} In this case, $[\ell2^{-n},(\ell+2)2^{-n}]$ contains both $t$ and $t+h$ and is equal to the support of $e_{n-1,\ell/2}$. Therefore, the  identity \eqref{Faber-Schauder rescaling} extends to $m=n-1$  and  we arrive at 
\begin{align}\label{Schauder sum rescaling eq}
\sum_{m=n-1}^\infty\sum_{k=0}^{2^m-1}\theta_{m,k}\Big(e_{m,k}(t+h)-e_{m,k}(t)\Big)=2^{\frac{1-n}2}\big(y( 2^{n-1}(s+h)))-y(2^{n-1}s)\big)
\end{align}
for
\begin{align}\label{y case 1}
y:=
\sum_{m=0}^\infty\sum_{k=0}^{2^m-1}\theta_{m+n-1,k+\ell 2^{m-1}}e_{m,k}\in\cX.
\end{align}

{\it Case 2: $\ell$ is odd.} In this case, the interval $[\ell2^{-n},(\ell+1)2^{-n}]$ contains $t$ and belongs to the support of $e_{n-1,(\ell-1)/2}$, while the interval $[(\ell+1)2^{-n},(\ell+2)2^{-n}]$ belongs to the support of $e_{n-1,(\ell+1)/2}$. The point $t+h$ may belong to either of the two intervals. We need to distinguish two further cases, depending on whether the coefficients $\theta_{n-1,(\ell-1)/2}$ and $\theta_{n-1,(\ell+1)/2}$ are identical or different.

{\it Case 2a: $\theta_{n-1,(\ell-1)/2}=\theta_{n-1,(\ell+1)/2}$.}  We have 
$$e_{n-1,(\ell-1)/2}(r)+e_{n-1,(\ell+1)/2}(r)=2^{\frac{1-n}2}\Big(\frac12-e_{0,0}(2^{{n-1}}r-\ell/2)\Big)\qquad\text{for $r\in[\ell2^{-n},(\ell+2)2^{-n}]$.}
$$
We thus arrive  again at \eqref{Schauder sum rescaling eq}, but this time for 
$$
y:=-\theta_{n-1,(\ell-1)/2} e_{0,0}+
\sum_{m=1}^\infty\sum_{k=0}^{2^m-1}\theta_{m+n-1,k+\ell 2^{m-1}}e_{m,k}\in\cX.$$

{\it Case 2b: $\theta_{n-1,(\ell-1)/2}=-\theta_{n-1,(\ell+1)/2}$.} Since $e_{n-1,(\ell-1)/2}(r)-e_{n-1,(\ell+1)/2}(r)$
 is linear on $[\ell2^{-n},(\ell+2)2^{-n}]$ with slope $- 2^{\frac{n-1}2}$,  we get
 \begin{equation}
\label{Case 2b eq} 
\begin{split}
 |x(t+h)-x(t)|&\le \sum_{m=0}^{n-1}h2^{m/2}+\bigg|\sum_{m=n}^\infty\sum_{k=0}^{2^m-1}\theta_{m,k}\Big(e_{m,k}(t+h)-e_{m,k}(t)\Big)\bigg|\\
 &=h(1+\sqrt2)(2^{n/2}-1)+2^{-n/2}|y( 2^{n}(s+h)))-y(2^{n}s)|,
 \end{split}
 \end{equation}
where
\begin{align}\label{Case 2b y}
y(r)=\begin{cases}
y_0(r)&\text{if $0\le r\le 1$,}\\
y_1(r-1)&\text{if $1\le r\le2$,}
\end{cases}
\end{align}
and $y_0,y_1\in\cX$ are given by
\begin{align}
y_0:=\sum_{m=0}^\infty\sum_{k=0}^{2^{m}-1}\theta_{m+n,k+\ell 2^{m}}e_{m,k},\qquad y_1:=\sum_{m=0}^\infty\sum_{k=0}^{2^{m}-1}\theta_{m+n,k+(\ell+1) 2^{m}}e_{m,k}
\end{align}
See Figure \ref{xlowerstar fig} for an illustration in which $y_0=\wh x=-y_1$, as it will be needed in the proof of Theorem~\ref{modulus continuity thm} (b).

\medskip

\begin{proof}[Proof of Theorem~\ref{modulus continuity thm} (a)] Let again $h>0$ be given with $n=\nu(h)$. Since $\theta_{m,k}=1$ for all $m,k$, \eqref{mod cont initial estimate}, \eqref{Schauder sum rescaling eq}, \eqref{y case 1}, and Theorem~\ref{max osc thm} (a) 
 imply that in Case 1
 \begin{align}|\wh x(t+h)-\wh x(t)|&\le h(1+\sqrt2)(2^{(n-1)/2}-1)+2^{\frac{1-n}2}\big|\wh x( 2^{n-1}(s+h)))-\wh x(2^{n-1}s)\big|\label{wh x Case 1 estimate}\\
 &\le h(1+\sqrt2)2^{(n-1)/2}+\frac{2^{\frac{1-n}2}}3(2+\sqrt2)=\om (h).\nonumber
 \end{align}
In Case 2a we get a similar estimate, but on the right-hand side of \eqref{wh x Case 1 estimate}, $\wh x$ needs to be replaced by 
$$\wh y:=- e_{0,0}+
\sum_{m=1}^\infty\sum_{k=0}^{2^m-1}e_{m,k}.
$$
But note that $\wh y(t)=\wh x(\varphi(t))-1/2$, where $\varphi(t)=1/2-t$ for $t\le1/2$ and $\varphi(t)=t-1/2$ for $t\ge1/2$. Therefore, $|\wh x(t+h)-\wh x(t)|\le\om (h)$ also holds in Case 2a. Finally, Case 2b cannot occur. This concludes the proof of \lq\lq$\le$\rq\rq.

To prove  \lq\lq$\ge$\rq\rq, we let $t:=0$ and $h_n:=\frac23 2^{-n}$. Then we are in Case 1, and 
$$|\wh x(t+h_n)-\wh x(t)|=(1+\sqrt2)(2^{(n-1)/2}-1)h_n+2^{(1-n)/2}\wh x(2^{n-1}h_n)=\om (h_n)-(1+\sqrt2)h_n,
$$
by Theorem~\ref{max osc thm} (a). Since $\om (h_n)=O(\sqrt{h_n})$, we get \lq\lq$\ge$\rq\rq.\end{proof}

\begin{proof}[Proof of Theorem~\ref{modulus continuity thm} (b)] Let again $h>0$ be given with $n=\nu(h)$. In Case 1, \eqref{mod cont initial estimate}, \eqref{Schauder sum rescaling eq}, \eqref{y case 1}, and Theorem~\ref{max osc thm} (b) 
 imply that for each $x\in\cX$
  \begin{align}| x(t+h)- x(t)|&\le h(1+\sqrt2)(2^{(n-1)/2}-1)+2^{\frac{1-n}2}\sup_{y\in\cX}\big|y( 2^{n-1}(s+h)))-y(2^{n-1}s)\big|\nonumber\\
 &\le h(1+\sqrt2)2^{(n-1)/2}+\frac{2^{\frac{1-n}2}}6(5+4{\sqrt2})\nonumber\\ 
 &=(1+\sqrt2)h2^{n/2}-\frac{2^{\log_2h+n/2}}{\sqrt2} +\frac13\Big(\frac5{\sqrt2}+4\Big)2^{-n/2}. \label{sqrt2om domination}\end{align}
 Since $\log_2h+n/2\ge-n/2-1$, the middle term of the preceding sum is bounded from above by $-2^{-n/2}/\sqrt8$, and so the entire sum in \eqref{sqrt2om domination} is dominated by
 $$(1+\sqrt2)h2^{n/2}+\frac13\Big(\frac5{\sqrt2}+4-\frac3{\sqrt8}\Big)2^{-n/2},
 $$
 which is in turn   dominated by $\sqrt2\om(h)$.

  The same inequality as in Case 1 holds in Case 2a. 
  In Case 2b, we obtain 
    \begin{align*}| x(t+h)- x(t)|&\le    h(1+\sqrt2)(2^{n/2}-1)+2^{-n/2}\sup_{0\le r\le 2}\sup_{0\le s\le 2}|y( r)-y(s)|    ,
     \end{align*}
     where $y$ is as in \eqref{Case 2b y} for $y_0,y_1\in\cX$. Clearly, the supremum on the right-hand side is maximized when  $y_0=-y_1=\wh x$ and then equal to $\frac23(2+\sqrt2)$ according to Theorem~\ref{max osc thm} (a). Therefore,
     $$| x(t+h)- x(t)|\le  h(1+\sqrt2)2^{n/2}+2^{-n/2}\frac23(2+\sqrt2)=\sqrt2\om(h). 
     $$
      This concludes the proof of \lq\lq$\le$\rq\rq.

To prove  \lq\lq$\ge$\rq\rq, we let
$$t_n:=\frac12 -\frac13 2^{-n} \qquad \text{and}\qquad h_n:=\frac232^{-n},
$$
so that $\nu(h_n)=n$. When considering the function $x_*:=-x^*\in\cX$, we are  in Case 2b. 
First, we have $e_{0,0}(t_n+h_n)-e_{0,0}(t_n)=0$. Second,  for $m=1,\dots, n$, the function
$$g_m:=-e_{m,2^{m-1}-1}+e_{m,2^{m-1}}
$$
is linear on $[\frac12-2^{-m-1},\frac12+2^{-m-1}]\supset[t_n,t_n+h_n]$ with slope $2^{m/2}$. Using this fact for $m\le n-1$ gives
\begin{align*}
x_*(t_n+h_n)-x_*(t_n)&=\sum_{m=1}^{n-1}2^{m/2}h_n+\sum_{m=n}^\infty\bigg(\sum_{\ell=2^{m-1}}^{2^m-1}e_{m,\ell}(t_n+h_n)+\sum_{k=0}^{2^{m-1}-1}e_{m,k}(t_n)\bigg)\\
&=(1+\sqrt2)(2^{n/2}-\sqrt2)h_n+2^{-n/2}\big(\wh x(1/3)+\wh x(2/3)\big)\\
& =(1+\sqrt2)2^{n/2}h_n-(\sqrt 2+2)h_n+2^{-n/2}\frac23(2+\sqrt2)\\
&=\sqrt2\om(h_n)-(\sqrt 2+2)h_n,
\end{align*}
where, in the second step, we have argued as in \eqref{Case 2b eq}, and, in the third step, we have used Theorem~\ref{max osc thm} (a); see Figure \ref{xlowerstar fig} for an illustration.  Since $\om (h_n)=O(\sqrt{h_n})$, we get \lq\lq$\ge$\rq\rq.
\end{proof}
\begin{figure}
\begin{center}
\begin{overpic}[width=15cm]{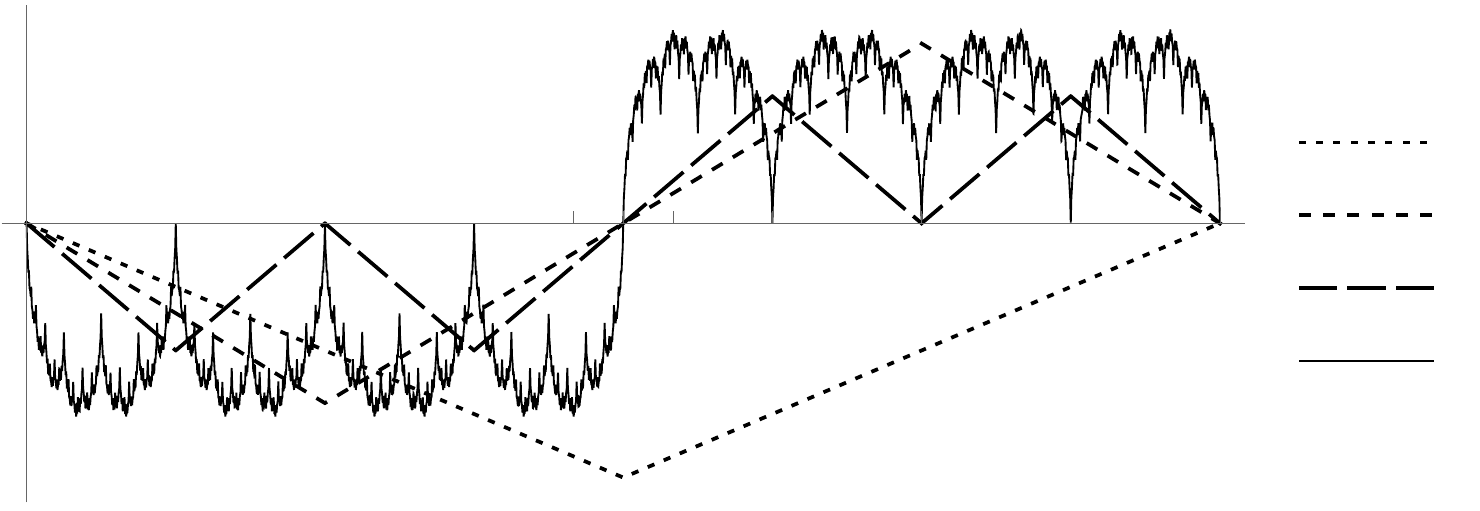}
\put(20,21){\scriptsize{$2^{-(n-1)}$}}
\put(38,21){\scriptsize{$t_n$}}
\put(44.3,17.5){\scriptsize{$t_n+h_n$}}
\put(59,17.5){\scriptsize{$\frac12+2^{-(n-1)}$}}
\put(82.2,17.5){\scriptsize{$1$}}
\put(98,9.7){\scriptsize{$m\ge3$}}
\put(98,14.7){\scriptsize{$m=2$}}
\put(98,19.7){\scriptsize{$m=1$}}
\put(98,24.7){\scriptsize{$m=0$}}
\end{overpic}
\caption{Illustration of Case 2b and, in particular, of the proof of \lq$\ge$\rq\ in Theorem~\ref{modulus continuity thm} (b) for $n=3$. The functions $e_{m,k}$ in the Faber--Schauder development of $x_*=-x^*$ are plotted individually for  generations $m\le n-1$. Their sum   over all generations $m\ge n$ corresponds to a sequence of rescaled functions $\wh x$.}
\label{xlowerstar fig}
\end{center}
\end{figure}

\medskip

\begin{proof}[Proof of Corollary~\ref{compact cor}]   Theorem~\ref{max osc thm}  implies that the family $\cX$ is uniformly bounded and  Theorem~\ref{modulus continuity thm} (b) yields that $\cX$ is equicontinuous. Therefore it only remains to show that $\cX$ is closed in $C[0,1]$. Following Hata and Yamaguti~\cite{HataYamaguti}, let $(x_n)_{n\in\bN}$ be a sequence in $\cX$ that converges uniformly to some $x\in C[0,1]$. We clearly have $x(0)=x(1)=0$. It is well known that any such function can be uniquely represented as a uniformly convergent  series of Faber--Schauder functions,
\begin{align}\label{Faber-Schauder representation of cont funct}
x=\sum_{m=0}^\infty\sum_{k=0}^{2^m-1}a_{m,k}(x)e_{m,k},
\end{align}
where the coefficients $a_{m,k}$ are given as 
$$a_{m,k}(x)=2^{m/2}\bigg(2x\Big(\frac{2k+1}{2^{m+1}}\Big)-x\Big(\frac k{2^{m}}\Big)-x\Big(\frac{k+1}{2^{m}}\Big)\bigg).
$$
Clearly, we have $a_{m,k}(x_n)\to a_{m,k}(x)$ for all pairs $m,k$ as $n\ua\infty$. But since each $x_n\in\cX$ and the representation \eqref{Faber-Schauder representation of cont funct} is unique, we must have $a_{m,k}(x_m)\in\{-1,+1\}$, which implies that also $a_{m,k}(x)\in\{-1,+1\}$ and in turn that $x\in\cX$.
\end{proof}

\subsection{Proofs of Propositions~\ref{Takagi quad var prop} and~\ref{MountTakagi_var_prop}}

\begin{remark}\label{BVQVRemark}
As already observed in \cite[Remark 8]{SchiedCPPI}, the following facts  follow  easily from Propositions 2.2.2, 2.2.9, and 2.3.2 in \cite{Sondermann}.  Suppose that $x\in C[0,1]$  admits the continuous quadratic variation $\<x\>$ along $(\bT_n)$ and $f$ is continuous and of finite variation. Then both $\<f\>$ and $\<x+f\>$ exist  along $(\bT_n)$  and are given by $\<f\>=0$ and $\<x+f\>=\<x\>$. By means of the polarization identity \eqref{polarization in approx} we get moreover that $\<x,f\>=0$.
\end{remark}

\begin{proof}[Proof of Proposition~\ref{Takagi quad var prop}]  Lemma 1.1 (ii) in \cite{Gantert} states  that a function $x\in C[0,1]$ with Faber--Schauder development $ \sum_{m=0}^{\infty}\sum_{k=0}^{2^m-1}\theta_{m,k}e_{m,k}$ satisfies
$$\<x\>_1^n=\frac1{2^n}\sum_{m=0}^{n-1}\sum_{k=0}^{2^m-1}\theta_{m,k}^2.
$$ 
This immediately yields $\<x\>_1=1$ for all $x\in\cX$. 

The first scaling property in \eqref{Schauder self-similarity} implies that for any $x\in\cX$ there exists $y\in\cX$ and a linear function $f$ such that $x(t)=f(2t)+2^{-1/2}y(2t)$ for $0\le t\le 1/2$. It hence follows from Remark \ref{BVQVRemark} that $\<x\>_{\frac12}=\<f+2^{-1/2}y\>_1=\<2^{-1/2}y\>_1=1/2$. Iteratively, we obtain $\<x\>_{2^{-\ell}}=2^{-\ell}$ for all $\ell\in\bN$.  Using also the second scaling property in \eqref{Schauder self-similarity} gives in a similar manner that $\<x\>_{(k+1)2^{-\ell}}-\<x\>_{k2^{-\ell}}=2^{-\ell}$ for  $k,\ell\in\bN$ with $(k+1)2^{-\ell}\le1$. We therefore arrive at $\<x\>_t=t$ for all dyadic rationals $t\in[0,1]$. A sandwich argument extends this fact to all $t\in[0,1]$.\end{proof}

 \medskip
 
 For $x\in\cX$ with Faber--Schauder expansion
$x= \sum_{m=0}^{\infty}\sum_{k=0}^{2^m-1}\theta_{m,k}e_{m,k}
$
and $n\in\bN$, we define
$x^n$ by \eqref{Mount Takagi approx xN}. 

\medskip

\begin{proof}[Proof of Proposition 
\ref{MountTakagi_var_prop}] We first show \eqref{MountTakagi_var_prop eq2} for $t=1$.  To this end, we  note that $x(s)=x^{n+k}(s)$ for $s\in\bT_n$ and $k\ge0$. Hence,
\begin{align}\label{MountTakagi_var_prop covar approx eq}
\<\wh x,y\>^n_1:=\sum_{s \in\bT_n }(\wh x(s')-\wh x(s))(y(s')-y(s))=\sum_{s \in\bT_n }(\wh x^n(s')-\wh x^n(s))(y^n(s')-y^n(s)).
\end{align}
Moreover,  
$$ \wh x^{n+1}\Big(\frac{s+s'}2\Big)=\wh x^n\Big(\frac{s+s'}2\Big)+\Delta_{n+1}=\frac12(\wh x^n(s)+\wh x^n(s'))+ \Delta_{n+1},
$$
where  $\Delta_{n+1}=2^{-(n+2)/2}$ is the maximal amplitude of a Faber--Schauder function $e_{n,\ell}$. Similarly, for $y$ we have
$$ y^{n+1}\Big(\frac{s+s'}2\Big)=y^n\Big(\frac{s+s'}2\Big)+(-1)^{n+1}\Delta_{n+1}=\frac12(y^n(s)+y^n(s'))+ (-1)^{n+1}\Delta_{n+1}.
$$
Therefore, when passing from $n$ to $n+1$ in \eqref{MountTakagi_var_prop covar approx eq}, each term $(\wh x^n(s')-\wh x^n(s))(y^n(s')-y^n(s))$  will be replaced by 
\begin{align*}\Big(\wh x^{n+1}(s')-&\wh x^{n+1}\Big(\frac{s+s'}2\Big)\Big)\Big(y^{n+1}(s')-y^{n+1}\Big(\frac{s+s'}2\Big)\Big)\\
+\Big(\wh x^{n+1}\Big(&\frac{s+s'}2\Big)-\wh x^{n+1}(s)\Big)\Big(y^{n+1}\Big(\frac{s+s'}2\Big)-y^{n+1}(s)\Big)\\
&=\Big(\frac12\big(\wh x^n(s')-\wh x^n(s)\big)-\Delta_{n+1}\Big)\Big(\frac12\big(y^n(s')-y^n(s)\big)-(-1)^{n+1}\Delta_{n+1}\Big)\\
&\qquad+\Big(\frac12\big(\wh x^n(s')-\wh x^n(s)\big)+\Delta_{n+1}\Big)\Big(\frac12\big(y^n(s')-y^n(s)\big)+(-1)^{n+1}\Delta_{n+1}\Big)\\
&=\frac12(\wh x^n(s')-\wh x^n(s))(y^n(s')-y^n(s))+2(-1)^{n+1}\Delta_{n+1}^2\\
&=\frac12(\wh x^n(s')-\wh x^n(s))(y^n(s')-y^n(s))+(-1)^{n+1}2^{-n-1}
\end{align*}
So
$$\<\wh x,y\>^{n+1}_1=\frac12 \<\wh x,y\>^n_1+ (-1)^{n+1}\sum_{s\in\bT_n}2^{-n-1}=\frac12 \Big(\<\wh x,y\>^n_1+(-1)^{n+1}\Big) 
$$
and in turn
$$\<\wh x,y\>^{2n+1}_1=\frac14\<\wh x,y\>^{2n-1}_1+\frac14\qquad\text{and}\qquad \<\wh x,y\>^{2n+2}_1=\frac14\<\wh x,y\>^{2n}_1-\frac14.
$$
This recursion easily implies  \eqref{MountTakagi_var_prop eq2} for $t=1$. 

Next, still for $t=1$, the identities \eqref{MountTakagi_var_prop eq1} follow immediately from \eqref{MountTakagi_var_prop eq2} and the polarization identity \eqref{polarization in approx}.   To prove \eqref{MountTakagi_var_prop eq1} for all $t\in[0,1]$, we note that the Faber--Schauder expansion \eqref{Mount_Takagi_moQV eq} of $\wh y:=\wh x+y$ and the first scaling property in \eqref{Schauder self-similarity} imply the following self-similarity relation: $ \wh y(t)=f(4t)+\frac12\wh y(4t)$, where $0\le t\le 1/4$ and $f$ is a piecewise linear function and hence of bounded variation. It therefore follows as in the proof of Proposition~\ref{Takagi quad var prop} that, if the quadratic variation $\<\wh y\>_1$ exists along some subsequence of $(\bT_n)$, then  $\<\wh y\>_{\frac14}$ also exists along that subsequence and equals $\frac14\<\wh y\>_1$. The two identities \eqref{MountTakagi_var_prop eq1} now follow  as in the proof of Proposition~\ref{Takagi quad var prop} and by further exploiting the self-similarity of $\wh y$. By using once again polarization \eqref{polarization in approx}, we finally arrive at \eqref{MountTakagi_var_prop eq2} for all $t\in[0,1]$.
\end{proof}

\medskip

\noindent {\bf Acknowledgement.} The author expresses his gratitude toward two anonymous referees for valuable comments and suggestions that  helped to improve the paper substantially. 

\bibliography{CTBook}{}

\begin{thebibliography}{10}

\bibitem{Allaartflexible}
P.~C. Allaart.
\newblock On a flexible class of continuous functions with uniform local
  structure.
\newblock {\em Journal of the Mathematical Society of Japan}, 61(1):237--262,
  2009.

\bibitem{AllaartKawamura}
P.~C. Allaart and K.~Kawamura.
\newblock The {T}akagi function: a survey.
\newblock {\em Real Analysis Exchange}, 37(1):1--54, 2011.

\bibitem{Benderetal1}
C.~Bender, T.~Sottinen, and E.~Valkeila.
\newblock Pricing by hedging and no-arbitrage beyond semimartingales.
\newblock {\em Finance Stoch.}, 12(4):441--468, 2008.

\bibitem{BickWillinger}
A.~Bick and W.~Willinger.
\newblock Dynamic spanning without probabilities.
\newblock {\em Stochastic Process. Appl.}, 50(2):349--374, 1994.

\bibitem{Billingsley1982}
P.~{Billingsley}.
\newblock {Van der Waerden's continuous nowhere differentiable function.}
\newblock {\em {Am. Math. Mon.}}, 89:691, 1982.

\bibitem{ContFournieJFA}
R.~Cont and D.-A. Fourni{\'e}.
\newblock Change of variable formulas for non-anticipative functionals on path
  space.
\newblock {\em J. Funct. Anal.}, 259(4):1043--1072, 2010.

\bibitem{ContFournieAOP}
R.~Cont and D.-A. Fourni{\'e}.
\newblock Functional {I}t\^o calculus and stochastic integral representation of
  martingales.
\newblock {\em Ann. Probab.}, 41(1):109--133, 2013.

\bibitem{DavisRavalObloij}
M.~Davis, J.~Ob{\l}{\'o}j, and V.~Raval.
\newblock Arbitrage bounds for prices of weighted variance swaps.
\newblock {\em {\rm To appear in} Mathematical Finance}, 2014.

\bibitem{deRham}
G.~de~Rham.
\newblock Sur un exemple de fonction continue sans d{\'e}riv{\'e}e.
\newblock {\em Enseign. Math}, 3:71--72, 1957.

\bibitem{Dupire}
B.~Dupire.
\newblock Functional {I}t{\^o} calculus.
\newblock {\em Bloomberg Portfolio Research Paper No. 2009-04-FRONTIERS}, 2009.
\newblock Available at {\tt http://dx.doi.org/10.2139/ssrn.1435551}.

\bibitem{EkrenKellerTouzi}
I.~Ekren, C.~Keller, N.~Touzi, and J.~Zhang.
\newblock On viscosity solutions of path dependent {PDE}s.
\newblock {\em Ann. Probab.}, 42(1):204--236, 2014.

\bibitem{Faber}
G.~Faber.
\newblock {\"U}ber die {O}rthogonalfunktionen des {H}errn {H}aar.
\newblock {\em Jahresbericht der deutschen Mathematiker-Vereinigung},
  19:104--112, 1910.

\bibitem{FoellmerIto}
H.~F{\"o}llmer.
\newblock Calcul d'{I}t\^o sans probabilit\'es.
\newblock In {\em Seminar on {P}robability, {XV} ({U}niv. {S}trasbourg,
  {S}trasbourg, 1979/1980)}, volume 850 of {\em Lecture Notes in Math.}, pages
  143--150. Springer, Berlin, 1981.

\bibitem{FoellmerECM}
H.~F{\"o}llmer.
\newblock Probabilistic aspects of financial risk.
\newblock In {\em European {C}ongress of {M}athematics, {V}ol. {I}
  ({B}arcelona, 2000)}, volume 201 of {\em Progr. Math.}, pages 21--36.
  Birkh\"auser, Basel, 2001.

\bibitem{FoellmerSchiedBernoulli}
H.~F{\"o}llmer and A.~Schied.
\newblock Probabilistic aspects of finance.
\newblock {\em Bernoulli}, 19(4):1306--1326, 2013.

\bibitem{Freedman}
D.~Freedman.
\newblock {\em Brownian motion and diffusion}.
\newblock Springer-Verlag, New York, second edition, 1983.

\bibitem{FrizHairer}
P.~K. Friz and M.~Hairer.
\newblock {\em A course on rough paths}.
\newblock Springer-Verlag, Heidelberg, 2014.

\bibitem{GantertDiss}
N.~Gantert.
\newblock {\em Einige grosse {A}bweichungen der {B}rownschen {B}ewegung}.
\newblock Dissertation, Rheinische Friedrich-Wilhelms-Universit{\"a}t Bonn;
  Bonner Mathematische Schriften, 224. 1991.

\bibitem{Gantert}
N.~Gantert.
\newblock Self-similarity of {B}rownian motion and a large deviation principle
  for random fields on a binary tree.
\newblock {\em Probab. Theory Related Fields}, 98(1):7--20, 1994.

\bibitem{HataYamaguti}
M.~Hata and M.~Yamaguti.
\newblock The {T}akagi function and its generalization.
\newblock {\em Japan J. Appl. Math.}, 1(1):183--199, 1984.

\bibitem{Kahane}
J.-P. Kahane.
\newblock Sur l'exemple, donn{\'e} par {M}. de {R}ham, d'une fonction continue
  sans d{\'e}riv{\'e}e.
\newblock {\em Enseignement Math}, 5:53--57, 1959.

\bibitem{KaratzasShreve}
I.~Karatzas and S.~E. Shreve.
\newblock {\em Brownian motion and stochastic calculus}, volume 113 of {\em
  Graduate Texts in Mathematics}.
\newblock Springer-Verlag, New York, second edition, 1991.

\bibitem{Kono}
N.~K{\^o}no.
\newblock On generalized {T}akagi functions.
\newblock {\em Acta Math. Hungar.}, 49(3-4):315--324, 1987.

\bibitem{Lagarias}
J.~C. Lagarias.
\newblock The {T}akagi function and its properties.
\newblock In {\em Functions in number theory and their probabilistic aspects},
  RIMS K\^oky\^uroku Bessatsu, B34, pages 153--189. Res. Inst. Math. Sci.
  (RIMS), Kyoto, 2012.

\bibitem{Ledrappier}
F.~Ledrappier.
\newblock On the dimension of some graphs.
\newblock In {\em Symbolic dynamics and its applications ({N}ew {H}aven, {CT},
  1991)}, volume 135 of {\em Contemp. Math.}, pages 285--293. Amer. Math. Soc.,
  Providence, RI, 1992.

\bibitem{SchiedCPPI}
A.~Schied.
\newblock Model-free {CPPI}.
\newblock {\em J. Econom. Dynam. Control}, 40:84--94, 2014.

\bibitem{SchiedStadje}
A.~Schied and M.~Stadje.
\newblock Robustness of delta hedging for path-dependent options in local
  volatility models.
\newblock {\em J. Appl. Probab.}, 44(4):865--879, 2007.

\bibitem{Sondermann}
D.~Sondermann.
\newblock {\em Introduction to stochastic calculus for finance}, volume 579 of
  {\em Lecture Notes in Economics and Mathematical Systems}.
\newblock Springer-Verlag, Berlin, 2006.

\bibitem{Takagi}
T.~Takagi.
\newblock A simple example of the continuous function without derivative.
\newblock In {\em Proc. Phys. Math. Soc. Japan}, volume~1, pages 176--177,
  1903.

\end{thebibliography}
\bibliographystyle{abbrv}

\end{document}